\newtheorem{theorem}{Theorem}[section]
\newtheorem{lemma}[theorem]{Lemma}
\newtheorem{corollary}[theorem]{Corollary}
\newtheorem{proposition}[theorem]{Proposition}
\newtheorem{remark}[theorem]{Remark}
\newtheorem{definition}[theorem]{Definition}
\DeclarePairedDelimiter{\abs}{\lvert \hspace{.7pt}}{\rvert}
\newcommand{\innerthmname}{}
\theoremstyle{definition}
\def\namedlabel#1#2{\begingroup
	#2%
	\def\@currentlabel{#2}%
	\phantomsection\label{#1}\endgroup
}
\newcommand{\Sp}{\mathbb{S}}
\newcommand{\ep}{\varepsilon}
\newcommand{\s}{\hspace{7pt}}
\newcommand{\R}{\mathbb{R}}
\newcommand{\C}{\mathbb{C}}
\newcommand{\vsp}{\vspace{2pt}}
\newcommand{\ns}{\hspace{-3pt}}
\newcommand{\Z}{\mathcal{Z}}
\newcommand{\N}{\mathcal{N}}
\newcommand{\mres}{\mathbin{\vrule height 1.6ex depth 0pt width
0.13ex\vrule height 0.13ex depth 0pt width 1.3ex}}
\DeclareMathOperator{\supp}{supp}
\DeclareMathOperator{\dist}{dist}
\title[Saddle solution to the complex Ginzburg-Landau system in three dimensions]{Saddle solution to the complex Ginzburg-Landau system in three dimensions}
\author[M. Caselli]{Michele Caselli}
\author[N. Picenni]{Nicola Picenni}
\address[M. Caselli]{
	Scuola Normale Superiore di Pisa
	\newline\indent 
	Piazza dei Cavalieri 7, 56126 Pisa, Italy}
\email{\href{mailto:michele.caselli@sns.it}{michele.caselli@sns.it}}
\address[N. Picenni]{
	University of Pisa, Department of Mathematics
	\newline\indent 
	Largo Bruno Pontecorvo 5, 56127 Pisa, Italy}
\email{\href{mailto:nicola.picenni@unipi.it}{nicola.picenni@unipi.it}}
\def\l@subsection{\@tocline{2}{0pt}{2.5pc}{5pc}{}}
\def\l@subsubsection{\@tocline{2}{0pt}{5pc}{7.5pc}{}}
\begin{document}
	
	\begin{abstract}
		We construct an entire saddle solution to the non-magnetic complex Ginzburg-Landau system in three dimensions, whose zero set is the union of two perpendicular, intersecting lines, and whose blow-downs concentrate on these lines with multiplicity one.
	\end{abstract}
	
	\maketitle

  \tableofcontents

\noindent\textbf{Mathematics Subject Classification (2020)}: 35Q56, 35B08, 35J50.


\vsp

\noindent \textbf{Keywords}: Ginzburg-Landau system, entire solution, singular zero set.

\section{Introduction}

In this work, we prove the existence of a smooth, saddle-type solution $u: \R^3 \to \C$ to the complex Ginzburg-Landau system
\begin{equation}\tag{\(GL \)}\label{GL}
    -\Delta u = u(1-|u|^2),
\end{equation} 
whose zero set $\Z(u) :=\{u=0\}$ (also called ``vorticity set") is the union of the two perpendicular, intersecting lines
\begin{equation*}
    X := \{xy=0 \} \cap \{ z=0 \} .
\end{equation*}
In addition, this solution enjoys particular symmetries (see Definition \ref{sobdef}) and its blow-downs concentrate on $X$ with multiplicity one, in the sense that \eqref{eq: main energy bound for u} below holds. 

\vsp 
Solutions to the system \eqref{GL} in a domain $\Omega \subset \R^n$ arise as critical points of the complex Ginzburg-Landau energy
\begin{equation}\label{GL energy}
            E(u, \Omega) := \int_{\Omega}  \frac{1}{2}|\nabla u|^2 + \frac{(1-|u|^2)^2}{4}   , \quad \mbox{for } u \in H^1(\Omega; \C) . 
  \end{equation}

With this notation, the main result of our work is the following.
\begin{theorem}\label{mainteo}
    There exists a smooth solution $u:\R^3 \to \C$ to \eqref{GL} with $|u|<1$ and zero set $\Z(u) = X$. Moreover, $u$ obeys the symmetries in Definition \ref{sobdef} and 
        \begin{equation}\label{eq: main energy bound for u}
            \lim_{r \to \infty} \frac{E(u, B_r)}{r\log(r)} = 4\pi .
        \end{equation}
\end{theorem}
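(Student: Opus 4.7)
The plan is to construct $u$ as the limit of a sequence of symmetric minimizers on large balls and verify the required properties by sharp two-sided energy estimates.

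\textbf{Symmetric minimization on balls.} Let $\mathcal{S}$ denote the class of $H^1(B_R;\C)$ maps satisfying the symmetries of Definition~\ref{sobdef}; every such map automatically vanishes on $X$. Fix a symmetric boundary datum $\phi_R : \partial B_R \to \C$ with the correct topology around each of the two axes, namely degree $1$ when restricted to small loops linking an axis away from the origin, and let $u_R$ minimize $E(\cdot, B_R)$ in $\mathcal{S}$ subject to $u_R = \phi_R$ on $\partial B_R$. Existence of $u_R$ is standard by the direct method (the symmetry class is weakly $H^1$-closed), the Palais principle of symmetric criticality gives that $u_R$ solves \eqref{GL} in $B_R$, and $|u_R|\leq 1$ follows from the maximum principle applied to $1-|u_R|^2$.

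\textbf{Sharp two-sided energy bound.} The heart of the argument is the estimate
\[
  E(u_R, B_r) \;=\; 4\pi\, r \log r + O(r), \qquad 1 \ll r \leq R/2.
\]
For the upper bound I construct an explicit competitor in $\mathcal{S}$: away from a thin tubular neighborhood of $X$, a smooth $\Sp^1$-valued map obtained by patching together the two orthogonal planar vortex maps via a radial partition of unity supported off the origin; inside the tubes, the standard degree-one planar vortex profile rescaled to the correct core size; and near the crossing at the origin, a bounded symmetric interpolation with $O(1)$ energy. Summing the two-dimensional vortex contribution $\pi \log r$ per unit length over both lines of total length $4r$ in $B_r$ yields $4\pi r\log r + O(r)$. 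The matching lower bound follows from the Jerrard--Sandier vortex-ball/Jacobian machinery applied to slices of $B_r$ by planes orthogonal to each axis.

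\textbf{Passage to the limit and identification of the zero set.} The uniform local energy bound, the bound $|u_R|\leq 1$, and elliptic regularity applied to \eqref{GL} give uniform $C^{k,\alpha}_{\loc}$ estimates on $u_R$. Extract a subsequence with $u_R \to u$ in $C^k_{\loc}(\R^3;\C)$; the limit $u$ is a smooth entire solution of \eqref{GL} satisfying the symmetries of Definition~\ref{sobdef}, hence $u = 0$ on $X$. The strong maximum principle applied to $1-|u|^2$ upgrades $|u|\leq 1$ to $|u|<1$, and \eqref{eq: main energy bound for u} is inherited from the sharp bound above. To show $\Z(u) = X$ I exploit a blow-down analysis: the bound $E(u, B_r)/(r\log r) \to 4\pi$ forces any subsequential blow-down Jacobian to concentrate on a stationary $1$-rectifiable varifold of multiplicity $1$ supported in $X$ (the symmetries determining the support), and $\eta$-compactness for the Ginzburg-Landau equation rules out stray components of $\Z(u)$.

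\textbf{Main obstacle.} The principal difficulty is the crossing of the two vortex lines at the origin, where the standard planar vortex analysis breaks down: the two vortex planes are orthogonal and meet in a single point, so neither a product ansatz nor a purely two-dimensional Jacobian argument applies directly. Extracting the \emph{sharp} constant $4\pi$ therefore requires a careful local energy analysis near this crossing to show that the extra contribution is only $O(r)$, and this is what makes both the construction of the competitor and the successful application of Jerrard--Sandier on transverse slices delicate. A secondary difficulty is that the constant map $u\equiv 0$ also lies in $\mathcal{S}$, so the sharp lower bound is essential to prevent the limit from degenerating.
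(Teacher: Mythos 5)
Your overall architecture (symmetric minimization on $B_R$, Palais symmetric criticality, maximum principle, compactness, sharp two-sided energy bounds) matches the paper, but there are two genuine gaps.

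The first and most serious concerns the claimed bound $E(u_R,B_r)=4\pi r\log r+O(r)$ for $1\ll r\le R/2$. Your competitor construction only controls the \emph{total} energy: since $u_R$ minimizes with respect to its own boundary datum on $\partial B_R$, a global competitor yields $E(u_R,B_R)\le 4\pi R\log R+O(R)$ and nothing more. To descend to a smaller ball $B_r$ you would naturally invoke the monotonicity formula, but for this energy it reads $E(u_R,B_r)/r\le E(u_R,B_R)/R$, which only gives $E(u_R,B_r)\lesssim r\log R$ — useless as $R\to\infty$. This scale mismatch is precisely the central difficulty of the problem. The paper resolves it by building, for each intermediate radius $r$, a competitor that agrees with $u_R$ outside $B_{(1+2\delta)r}$, equals the explicit profile $v_m$ inside $B_r$, and interpolates modulus and phase separately in two thin annuli; minimality then yields a self-improving inequality of the form
\begin{equation*}
E(u,B_r)\le (4\pi+C\delta)\,r\log r+\frac{C\sqrt r}{\sqrt\delta}\sqrt{E\bigl(u,B_{(1+2\delta)r}\bigr)},
\end{equation*}
which is closed by an iteration lemma and gives $4\pi r\log r+o(r\log r)$ (not $O(r)$, which you claim without justification). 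Without some version of this interpolation-and-absorption scheme your upper bound at scale $r$ is unproved.

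The second gap is the identification $\Z(u)=X$. Blow-down concentration plus $\eta$-compactness cannot exclude a zero $x_\circ\notin X$ at bounded distance from $X$: under blow-down such a point converges to a point of $X$ (or requires $\ep/\delta$ small, which fails when $\dist(x_\circ,X)$ is of order one for the unrescaled equation), so no contradiction arises. The paper instead arranges the boundary datum so that $u_1,u_2\ge 0$ on the octant $Q_R$, propagates strict positivity by the strong maximum principle applied to $u_1$ and $u_2$ separately, and handles the symmetry plane $\{z=0\}$ by Hopf's boundary point lemma; this sign structure passes to the limit $u$. You need some such argument. (Minor additional points: your lower bound via Jerrard--Sandier on transverse slices is a plausible alternative to the paper's clearing-out covering argument, though delicate near the crossing; and it is the \emph{upper} bound $E(u,B_\rho)=o(\rho^3)$, not the lower bound, that rules out $u\equiv 0$, since the zero map has energy $\sim\rho^3$ from the potential term.)
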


In particular, in the varifold viewpoint for the normalized energy densities 
\begin{equation}\label{eq: norm en dens}
            \mu_r := \frac{e_{1/r}(u_r)}{\pi \log(r) }dx , \quad \mbox{where } e_{1/r}(u_r) := \frac{1}{2}|\nabla u_r|^2 + r^2\frac{(1-|u_r |^2)^2}{4} , 
        \end{equation}
of the blow-downs $u_r(x) := u(r x)$, we obtain the following corollary. 

\begin{corollary}\label{cor: blow down}
    The normalized energy densities \eqref{eq: norm en dens} converge to a limit supported on $X$ with unit density. That is 
\begin{equation*}
           \mu_r \rightharpoonup \mu_\infty := \mathcal{H}^1 \mres (X \cap B_1) ,  
        \end{equation*}
in $C_c^0(B_1)^*$ as $r \to +\infty$. 
\end{corollary}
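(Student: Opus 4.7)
The strategy is to combine a soft compactness argument, based on the energy bound of Theorem \ref{mainteo}, with the rectifiability theory for Ginzburg--Landau energy concentration, and then to pin down the limit by exploiting the symmetries of $u$.

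First I would observe that, by the change of variables $y=rx$,
\[
\int_{B_R}e_{1/r}(u_r)\,dx = \frac{E(u,B_{Rr})}{r}\quad\text{for every }R>0,
\]
so \eqref{eq: main energy bound for u} yields $\mu_r(B_R)\to 4R = \mathcal{H}^1(X\cap B_R)$. In particular $\{\mu_r\}_{r>e}$ has locally uniformly bounded mass, and by Banach--Alaoglu any sequence $r_n\to\infty$ admits a subsequence along which $\mu_{r_{n_k}}\rightharpoonup\mu_\infty$ weakly-$*$ in $C^0_c(\R^3)^*$. It therefore suffices to prove that every such subsequential limit $\mu_\infty$ coincides with $\mathcal{H}^1\mres X$.

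Since $u_r$ satisfies $-\Delta u_r = r^2 u_r(1-|u_r|^2)$, the $\mu_{r_{n_k}}$ are normalized energy measures for critical points of the Ginzburg--Landau functional with parameter $\varepsilon_{n_k}=1/r_{n_k}\to 0$. The structure theorem for codimension-$2$ Ginzburg--Landau concentration (in the spirit of Alberti--Baldo--Orlandi, in its version for critical points) then identifies $\mu_\infty$ as a $1$-rectifiable measure on $\R^3$ with integer multiplicity. Moreover, scale invariance of the blow-down procedure (replacing $r_{n_k}$ by $\lambda r_{n_k}$ merely dilates the limit) forces $\mu_\infty$ to be a cone with vertex at the origin, whence
\[
\mu_\infty = \sum_{i}n_i\,\mathcal{H}^1\mres\ell_i
\]
for finitely many lines $\ell_i$ through $0$ and integers $n_i\geq 1$.

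The hardest step will then be to show that the lines $\ell_i$ are precisely the two axes composing $X$, each with $n_i=1$. Here the symmetries of $u$ listed in Definition \ref{sobdef} transfer to $\mu_\infty$, forcing the configuration $\{(\ell_i,n_i)\}$ to be invariant under the symmetry group of $X$; a clearing-out/$\eta$-ellipticity argument, built on the pointwise closeness of $u$ to an $\Sp^1$-valued map away from $X$, then rules out any $\ell_i$ not lying in $X$, since otherwise symmetry would generate an additional family of vortex lines whose collective mass would exceed the sharp constant $4\pi$ in \eqref{eq: main energy bound for u}. This gives $\mu_\infty = k\,\mathcal{H}^1\mres X$ for some integer $k\geq 1$, and matching the total mass, $4k = \mu_\infty(B_1) = \lim\mu_r(B_1) = 4$, forces $k=1$. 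Since every subsequential limit equals $\mathcal{H}^1\mres X$, the full family $\mu_r$ converges in $C^0_c(B_1)^*$, as claimed.
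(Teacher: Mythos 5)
Your overall strategy---compactness, the structure theorem for Ginzburg--Landau concentration applied to the blow-downs $u_r$ with $\ep=1/r$, and then matching the total mass against the sharp constant from \eqref{eq: main energy bound for u}---is the same as the paper's, and your opening computation $\mu_r(B_R)\to 4R=\mathcal{H}^1(X\cap B_R)$ is correct. However, two intermediate steps are genuinely problematic. First, the structure theorem for general critical points (Theorem \ref{thm: general convergence theorem}, due to Pigati--Stern) does \emph{not} give integer multiplicity: the limit decomposes as $|V|+f\,dx$ with $V$ a stationary rectifiable varifold of real density $\theta\ge 1$ plus a diffuse part $f\ge 0$, and the paper's introduction emphasizes precisely that the density may take any value in $\{1\}\cup[2,+\infty)$ for general critical points. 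Your mass-matching would survive with real $\theta\ge 1$, but the diffuse part $f$ must also be eliminated by the mass bound, which your argument does not address. Second, the assertion that ``scale invariance forces $\mu_\infty$ to be a cone'' is a gap: replacing $r_{n_k}$ by $\lambda r_{n_k}$ produces a \emph{different} subsequence, whose limit need not coincide with the dilation of $\mu_\infty$ unless one already knows uniqueness of the full limit---which is exactly what is being proved. Relatedly, the inclusion $X\cap B_1\subseteq\supp\mu_\infty$, without which the final identity $4k=4$ cannot be read off, is only gestured at via clearing-out.

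The paper's proof shows that neither conicity, nor integrality, nor the symmetry argument is needed: by the last statement of Theorem \ref{thm: general convergence theorem}, $\supp(|V|)$ is the local Hausdorff limit of $\{|u_r|<1/2\}$ and hence contains $X\cap B_1$ (since $u_r\equiv 0$ on $X$ for every $r$), so $|V|(B_1)\ge\theta\,\mathcal{H}^1(X\cap B_1)\ge 4$; combined with the upper semicontinuity bound $\mu_\infty(B_1)\le\liminf_{r\to\infty}\mu_r(B_1)=4$ coming from \eqref{eq: main energy bound for u}, this forces $f\equiv 0$, $\theta\equiv 1$ and $V=\mathcal{H}^1\mres(X\cap B_1)$ in one stroke. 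I would recommend replacing your cone/integrality detour with this direct support-plus-mass comparison.
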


Our result provides the first construction of an entire saddle solution to the complex Ginzburg-Landau system in three dimensions with a singular zero set and can be thought of as a codimension two analogue of the classical work \cite{DangFife92}, where the authors build an entire saddle solution $u: \R^2 \to (-1,1)$ of the Allen-Cahn equation $- \Delta u = u(1-u^2)$ with zero set $\{xy=0\}$.

\vsp 

We call our $u$ a ``saddle solution" since, as in the case of codimension one in \cite{DangFife92}, we expect it to be unstable. Indeed, the solution of \cite{DangFife92} is unstable and it has index $1$ (c.f. \cite{Schatzman}).

\subsection{A brief overview of the Ginzburg-Landau system}

The system (\ref{GL}) and the energy functional (\ref{GL energy}) arise as a simplified version (with a vanishing magnetic field) of the Ginzburg-Landau model for superconductivity, and have attracted considerable attention in the mathematical community following the seminal work \cite{BBHbook}. A particular effort has been devoted to the asymptotic analysis of the $\ep$-rescaled versions
$$- \Delta u = \frac{1}{\ep^2}u(1-|u|^2),\qquad E_\ep(u,\Omega):= \int_{\Omega}  \frac{1}{2}|\nabla u|^2 + \frac{(1-|u|^2)^2}{4\ep^2}  $$
both as a relaxation for the problem of finding singular $\Sp^1$-valued harmonic maps when there are no finite energy competitors, and as a codimension two analogue of the Allen-Cahn approximation of the area functional to find minimal surfaces.

\vsp 

After nearly two decades of research, and many highly influential papers \cite{LinRiv1,LinRiv2,BBO-asymptotics,ABO,BOSannals,DanielExistence}, a rather complete comprehension of this problem (and of the pathologies it may display) came with the work of Pigati-Stern \cite{Quantnonquant}, whose main result is (partially) quoted in Theorem~\ref{thm: general convergence theorem}.

\vsp 

A different but of course strongly related question concerns the characterization of entire solutions to (\ref{GL}), under suitable assumptions on the behavior of $u$ at infinity, either in terms of energy growth or of decay of $1-|u|$.

\vsp 
In the two dimensional case, it is conjectured that, up to isometries, the only solutions such that $|u|\to 1$ at infinity are the planar degree $d\in\mathbb{Z}$ vortices, namely a sequence of solutions that can be written in polar coordinate in the form $v_d(r,\theta)=\rho_d(r)e^{id\theta}$, where $\rho_d:[0,+\infty)\to [0,1]$ satisfies a suitable ODE (see \cite{radialsol1,radialsol2}). This long-standing conjecture is still open, even though partial results under additional assumptions are known (see \cite[Section~2]{Brezis-op}).

\vsp

In three dimensions, very few nontrivial (that is, neither constant nor the straight extension of the planar vortices $v_d$) solutions of the complex Ginzburg-Landau system are known. To our knowledge, beyond these trivial examples, the only rigorously constructed entire nontrivial solutions to date are the interacting helical vortex filaments families built by D\'avila-del Pino-Medina-Rodiac in \cite{helicalvortfilaments}, featuring a vorticity set of $n\ge2$ interacting helices. These solutions also furnished a negative answer to the Ginzburg–Landau analogue of the Gibbons conjecture: whether an entire solution $u$ of \eqref{GL} such that 
\begin{equation*}
    \lim_{|z| \to \infty } |u(z,t)| = 1 , \s \mbox{uniformly in } t \in \R
\end{equation*}
is necessarily a function of $z$ only. 

\vsp

In addition, regarding solutions in a bounded domain or a compact Riemannian manifold, two more families of solutions are known: the nearly parallel vortex filament solutions in bounded cylindrical domains of \cite{npvortfilaments} and the solutions built in \cite{ColJerSetrnGeodesicGL} concentrating on a nondegenerate geodesic in a $3$-dimensional closed manifold. 

\vsp

Concerning Corollary~\ref{cor: blow down}, we point out that the integrality and unit-multiplicity of the blow-down varifold in Corollary \ref{cor: blow down} are nontrivial. Indeed, in dimensions $n\ge 3$, Pigati–Stern \cite{Quantnonquant} proved that for general families of Ginzburg–Landau critical points within the natural $O(\abs{\log(\ep)})$ energy regime, the energy–concentration varifold arising in the limit need not be integral: the limit density can take values in $\{1\} \cup [2, +\infty)$ and they construct examples realizing every prescribed density $\theta \ge 2$. Their examples are obtained suitably blowing down the interacting helical vortex filament solutions of \cite{helicalvortfilaments}. In contrast, the solution given by our Theorem \ref{mainteo} yields a stationary integral varifold of unit density on each line. 

\vsp 

We also remark that our solution is not an unconstrained local minimizer (i.e., it is not a minimizer of \eqref{GL energy} in compact sets with its own boundary datum), so it does not contradict the conjecture (see \cite[Open Problem 5]{Pisante14} or \cite{SandierShafrir}) that the only solutions of \eqref{GL} which are (unconstrained) local minimizers and satisfy
  \begin{equation*}
            \limsup_{ r \to \infty } \frac{E(u, B_r)}{ r \log(r)}   <+\infty   ,
  \end{equation*}
  are $2$-dimensional vortices in some direction. That is, up to an isometry on $\R^3$ and an orthogonal transformation in $\C$ they can be written as $u(x,y,z)=v_d(x,y)$. By the symmetries of the zero set $\mathcal{Z}(u)=X$ of our solution, it is clear that it cannot be written in such a form. Nevertheless, by construction, $u$ is a local minimizer in a suitable space with symmetries $\widetilde H^1(\R^3; \C)$ as defined in Definition \ref{sobdef}. 

\vsp

Finally, let us mention several possible questions which naturally arise in view of our result: determining the Morse index and stability of our saddle solution, extending the construction to other symmetric line-networks or periodic arrays, and clarifying uniqueness within the symmetry class, together with a classification of entire solutions at the $4\pi$ energy growth regime (cf. \cite{SandierShafrir}).

\vsp 
Moreover, we remark that our solution can trivially be extended to higher dimensions, providing examples of entire solutions in $\R^n$ whose zero set consists of two orthogonal $(n-2)$-planes intersecting along an $(n-3)$-plane. We expect that it should be possible to adapt our method to find further classes of solutions in higher dimension (or codimension) whose zero set consists of different arrangements of subspaces (for example, two $2$-planes intersecting in a point in $\R^4$).

\subsection{Structure of the proof}
Let us briefly describe the strategy we use to prove the existence of $u$ in Theorem \ref{mainteo}. The first part of the proof is quite standard, while the second part, in which we prove \eqref{eq: main energy bound for u}, requires a nontrivial idea due to the lack of a monotonicity formula at the correct rate (see below).

    \vsp 
    First, in \hyperref[sec:preliminaries]{Section~\ref{sec:preliminaries}} we introduce the precise setting of symmetric functions, then in \hyperref[sec: minimization in B_R]{Section~\ref*{sec: minimization in B_R}}, working in a suitable Sobolev space with symmetries $\widetilde H^1(B_R; \C)$ (defined in Definition \ref{sobdef}) that force every competitor to vanish on $X$, we minimize the Ginzburg-Landau functional \eqref{GL energy} in $B_R:=B_R(0) \subset \R^3$ with a generic Dirichlet boundary condition and we prove that a minimizer $u_R$ exists, solves \eqref{GL} in the entire ball $B_R$, satisfies the symmetries in Definition \ref{sobdef}, and vanishes exactly on $X\cap B_R$. 

    Then, in \hyperref[sec: g constr]{Section~\ref*{sec: g constr}}, we construct a suitable family of boundary data $g_R:\partial B_R\to\C$ for which the minimizers $u_R$ satisfy
 \begin{equation}\label{eq: intro bound u_R}
            \limsup_{R \to \infty} \frac{E(u_R, B_R)}{R\log(R)} \leq 4\pi,
        \end{equation}        
and in \hyperref[sec: global sol]{Section~\ref*{sec: global sol}}, we show that a subsequential limit of $u_R$ as $R\to +\infty$ converges to a limit function $u:\R^3 \to \C$ that solves \eqref{GL}, and we compute its degree around $X$.

\vsp 
Lastly, in \hyperref[sec: sharp energy asymptotics]{Section~\ref*{sec: sharp energy asymptotics}}, which is the most delicate part, we prove the sharp energy bound \eqref{eq: main energy bound for u} for our solution $u$, by a careful analysis of how the energy of $u_R$ propagate at scales smaller than $R$. This crucial step presents some specific difficulties for the (non-magnetic) Ginzburg-Landau energy \eqref{GL energy}. Indeed, in our setting the usual monotonicity formula (see, for example, \cite[Lemma~II.2]{BBO-asymptotics}) states that 
\begin{equation*}
    \frac{E(u_R, B_r)}{r} \le \frac{E(u_R, B_R)}{R} , \quad \forall \, r \le R , 
\end{equation*}
so it propagates energy from a large ball down to smaller scales with the wrong rate, and it cannot recover the optimal $ r\log(r)$ growth. More precisely, from (\ref{eq: intro bound u_R}) one only gets $ E(u_R, B_r) \lesssim  r\log (R) $; letting \(R\to\infty\) makes the right-hand side blow up, so this yields no growth information on the energy of $u$ in $B_r$. This makes our problem totally different from the codimension-one Ginzburg-Landau or the Yang-Mills-Higgs cases, where the monotonicity formula is compatible with the natural scaling and directly transmits sharp bounds across smaller radii (see \cite{Ilmanen-AC2Brakke,HutTon} for monotonicity formulas in the scalar case and \cite{PigStern, DPHP} for the Yang-Mills-Higgs case).

\vsp 
This mismatch forces an ad hoc propagation scheme for minimizers: in an annulus $B_{(1+\delta)r} \setminus B_{r}$ we patch $u_R$ with competitors that encode the spherical boundary phase of $g_R$ and a carefully chosen modulus, leading to a decay estimate that is (essentially)
\begin{equation*}
    E(u,B_r)\le 4\pi r\log(r)  +  C\sqrt{r\,E(u,B_{(1+\delta)r})}.
\end{equation*}
From here, an elementary iteration lemma (Lemma \ref{lem: growth reabsorbing lemma}) then converts this into the global upper bound $E(u, B_r) \le 4\pi r\log(r)+ o(r\log(r))$. 

\vsp 

Finally, once the sharp upper bound is established, the corresponding lower bound and the characterization of the blow-down follow quite easily from some deep results from \cite{Quantnonquant}, which we recall in the appendix.


\section{Preliminaries and notation} \label{sec:preliminaries}

We begin by introducing some notation. For $n\ge 3$, we denote by $B_R(x)$ a ball in $\R^n$ with center $x\in \R^n$ and radius $R>0$. When $n=2$, we denote by $D_R(x)$ the two-dimensional disk of radius $R>0$ and center $x \in \R^2$. In the case of $x=0$, we denote for short 
\begin{equation*}
    B_R = B_R(0) , \s   D_R = D_R(0) .
\end{equation*}

We describe now the setting of the function spaces with symmetries that we will work with. Let the coordinate reflections be
\begin{equation*}
    \mathcal R_x(x,y,z)=(-x,y,z),\qquad
\mathcal R_y(x,y,z)=(x,-y,z),\qquad
\mathcal R_z(x,y,z)=(x,y,-z),
\end{equation*}
and let $H^1(B_R; \C)$ be the standard Sobolev space $W^{1,2}(B_R; \C)$ for functions $u:B_R \subset \R^3 \to \C$.

\begin{definition}\label{sobdef} 
     We denote by $ \widetilde H^1(B_R; \C)$ the set of functions $ u\in H^1(B_R,; \C) $ that satisfy the following symmetries almost everywhere in $B_R$: 
\begin{equation*}
u\circ \mathcal R_x=- \overline u,\qquad u\circ \mathcal R_y=- \overline u,\qquad u\circ \mathcal R_z= \overline u.
\end{equation*}
\end{definition}

\begin{remark}\label{rem:symmetries}
We point out that any continuous function $u=u_1+iu_2 \in \widetilde{H}^1(B_R; \C)$ necessarily satisfies
\begin{equation}\label{eq: where u zero}
    u_1=0 \mbox{ on } \{x=0\} \cup \{y=0\}, \quad u_2=0 \mbox{ on } \{z=0\} ,  
\end{equation}
which implies in particular that $X\cap B_R\subset \Z(u)$. Actually, the same is true also for the precise representative of any $u\in \widetilde{H}^1(B_R; \C)$, but we do not need this in the sequel.

Moreover, if $u\in C^1(B_R;\C) \cap \widetilde{H}^1(B_R; \C)$, then it necessarily satisfies also
 \begin{equation}\label{rand1}
\begin{aligned}
    & \partial_x u_2 =0 , \s && \textnormal{on} \s \{x=0\} , \\ & \partial_y u_2 =0 ,  \s &&  \textnormal{on} \s \{y=0\} , \\ & \partial_z u_1 =0 ,  \s &&  \textnormal{on} \s \{z=0\} .
\end{aligned}
\end{equation}
    
\end{remark}

Let us set
\begin{equation*}
     Q := \{ x>0\,, y>0\,, z>0\} \,,  \mbox{ and } Q_R:=Q\cap B_R ,
\end{equation*}
and let also 
\begin{align*}
    \partial^s Q_R & := \partial B_R \cap \partial Q_R , \\
    \partial^x Q_R & := \{x=0\} \cap \partial Q_R , \\
    \partial^y Q_R & := \{y=0\} \cap \partial Q_R ,  \\
    \partial^z Q_R & := \{z=0\} \cap \partial Q_R .
\end{align*}
Observe that the value of any $u\in \widetilde H^1(B_R; \C)$ is completely determined by its values on the octant $Q_R$ just by reflecting these values according to the symmetries in Definition \ref{sobdef}. Vice versa, every map $v\in H^1(Q_R; \C)$ that satisfies \eqref{eq: where u zero} can be extended to a function in $\widetilde H^1(B_R; \C)$ by reflecting with our symmetries. 

Moreover, for every $ u \in  \widetilde H^1(B_R; \C)$ we have that 
\begin{equation*}
    E(u, B_R) = 8 E(u, Q_R) .  
\end{equation*}


\section{Minimization in \texorpdfstring{$B_R$}{} for general boundary datum}\label{sec: minimization in B_R}

This section is entirely dedicated to the proof of the following result.
\begin{proposition}\label{existence} 
    Let $g = g_1+ig_2 \in \widetilde H^{1/2}(\partial B_R ; \C)$ be such that $|g|\le 1$, $g_1 \ge 0 $ and $ g_2 \ge 0$ on $\partial^s Q_R$. Then, the minimum 
\begin{equation}\label{minprob}
     \min \Big\{ E(v, B_R ) \, : \,  v \in \widetilde H^1_g(B_R; \C) \Big\} 
\end{equation}
is achieved by a smooth map $u \in C^\infty(B_R; \C)$ that solves 
\begin{equation}\label{GL equation}
    \begin{cases} -\Delta u = u (1-|u|^2)   &   \mbox{in} \s B_R  ,  \\  \s u=g &  \mbox{on} \s \partial B_R  . \end{cases}
\end{equation}
Moreover $u=u_1+iu_2$ satisfies $|u|<1$, $ u_1, u_2>0$ in $ Q_R$ and $\Z(u)=X \cap B_R$. 
\end{proposition}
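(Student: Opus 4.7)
The plan is to combine the direct method of the calculus of variations with standard symmetrization, truncation, and strong-maximum-principle arguments.

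First I would verify that $\widetilde H^1_g(B_R;\C)$ is non-empty (one extends $g$ from $\partial^s Q_R$ to an $H^1$-function on $Q_R$ vanishing on the appropriate coordinate walls, and then reflects according to the symmetries), and that it is weakly closed in $H^1(B_R;\C)$, since the constraints in Definition~\ref{sobdef} are $\R$-linear and stable under weak convergence. Coercivity and weak lower semicontinuity of $E(\,\cdot\,,B_R)$ then yield a minimizer $u$ via the direct method. To enforce $|u|\le 1$, I replace $u$ by $u\min(1,1/|u|)$, which preserves the symmetries (the cut-off factor depends only on $|u|$) and strictly lowers $E$ if $\{|u|>1\}$ has positive measure. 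To enforce $u_1,u_2\ge 0$ on $Q_R$, I replace $(u_1,u_2)$ by $(|u_1|,|u_2|)$ on $Q_R$ and extend to $B_R$ by the symmetries; since $|u|^2=u_1^2+u_2^2$ and $|\nabla |u_j|\,|=|\nabla u_j|$ a.e., the energy is unchanged, and the hypotheses $g_1,g_2\ge 0$ on $\partial^s Q_R$ preserve the boundary datum.

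The main delicate point is to derive the \emph{unconstrained} Euler--Lagrange equation on the whole ball, since a priori $u$ is only critical with respect to $G$-symmetric variations, where $G$ is the finite group generated by the three reflections combined with complex conjugation according to Definition~\ref{sobdef}. This is precisely the principle of symmetric criticality: because both $E$ and $u$ are $G$-invariant, symmetrizing (with appropriate signs) any test function $\varphi\in C_c^\infty(B_R;\C)$ produces a $G$-invariant $\tilde\varphi$ whose pairings with $u$ in both the Dirichlet and potential terms coincide with those of $\varphi$, so the Euler--Lagrange condition in the symmetric class extends to all $\varphi$. Hence $u$ solves $-\Delta u=u(1-|u|^2)$ weakly in $B_R$, and standard elliptic bootstrap (the right-hand side is bounded since $|u|\le 1$) upgrades $u$ to $C^\infty(B_R;\C)$.

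It remains to upgrade weak inequalities to strict ones via the strong maximum principle. Writing $v:=1-|u|^2\ge 0$, a direct computation using the equation yields $-\Delta v+2|u|^2\,v = 2|\nabla u|^2\ge 0$ with bounded non-negative zero-order coefficient; if $v(x_0)=0$ at some interior point then $v\equiv 0$ on $B_R$, contradicting $v=1$ on $X\cap B_R\subset\Z(u)$ (guaranteed by Remark~\ref{rem:symmetries}), so $|u|<1$. Next, $u_1$ satisfies the linear equation $-\Delta u_1-(1-|u|^2)u_1=0$ with bounded coefficient, and the symmetries together with the symmetrization step force $u_1\ge 0$ on the connected open set $\{x>0,y>0\}\cap B_R$, where $u_1$ moreover vanishes on the coordinate walls. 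The strong maximum principle then leaves only $u_1>0$ or $u_1\equiv 0$ on that set, the latter being excluded by the nontriviality of the boundary datum $g_1$. An analogous argument on $\{z>0\}\cap B_R$ gives $u_2>0$. Unfolding through the symmetries yields $\Z(u_1)=\{x=0\}\cup\{y=0\}$ and $\Z(u_2)=\{z=0\}$, hence $\Z(u)=X\cap B_R$ as claimed.
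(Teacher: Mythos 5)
Your proposal is correct and follows the same overall strategy as the paper: direct method with truncation to the unit disk and componentwise absolute values to get $|u|\le 1$ and $u_1,u_2\ge 0$ in $Q_R$, symmetric criticality by (signed) symmetrization of arbitrary test functions to obtain the unconstrained Euler--Lagrange equation on all of $B_R$, elliptic regularity, and the strong maximum principle for the strict inequalities. The one place where you genuinely diverge is the characterization of $\Z(u)$ at points of $\{z=0\}\setminus X$, where $u_2$ vanishes automatically by the odd symmetry in $z$ and one must show $u_1\neq 0$. The paper works in the octant $Q_R$, where such points lie on the face $\partial^z Q_R$, and therefore has to invoke Hopf's boundary point lemma (using $\partial_z u_1=0$ on that face). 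You instead exploit that $u_1$ is even in $z$, so $u_1\ge 0$ on the larger connected open set $\{x>0,y>0\}\cap B_R$, which \emph{contains} $\{z=0,x>0,y>0\}$ in its interior; the interior strong maximum principle then gives $u_1>0$ there at once, and the odd reflections in $x$ and $y$ yield $\Z(u_1)=\{xy=0\}\cap B_R$, $\Z(u_2)=\{z=0\}\cap B_R$, hence $\Z(u)=X\cap B_R$. This is a slightly cleaner route that avoids the boundary-point lemma altogether; the paper's octant-based case analysis is the same argument restricted to a fundamental domain. (Both arguments, yours and the paper's, tacitly use that $g_1$, respectively $g_2$, does not vanish identically on $\partial^s Q_R$ to rule out $u_1\equiv 0$, an assumption not written explicitly in the statement but needed for the strict positivity.)
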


\begin{remark}\label{modleone}
     It is a simple consequence of the strong maximum principle that every solution of \eqref{GL equation}, not only the minimizer,  with $|g|\le 1$ on $\partial B_R$ satisfies $|u| \le 1$. Indeed, the function $v:=|u|^2-1$ solves
 \begin{equation*}
     -\Delta v + 2|u|^2 v \le 0 , \s \mbox{in } B_R.
 \end{equation*}
Since $v = |g|^2-1 \le 0 $ on $\partial B_R$ and $2|u|^2 \ge 0$, by the strong maximum principle $v=|u|^2-1 < 0$ in $B_R$, unless $|u|\equiv 1$, which is impossible because of (\ref{eq: where u zero}).
\end{remark}

\begin{proof}[Proof of Proposition \ref{existence}] 
The proof will be divided into four steps. 
\begin{itemize}[label={}, leftmargin=15pt]
\setlength\itemsep{3pt}
    \item\textbf{Step 1.} There exists a minimizer $u=u_1+iu_2 \in \widetilde H^1_g(B_R; \C)$ of the minimum problem \eqref{minprob}, and satisfies $|u|\le 1$ and $u_1, u_2 \ge 0$ in $Q_R$. 
    \item\textbf{Step 2.} Any minimizer $u$ is smooth in the octant $Q_R$ and solves $-\Delta u = u(1-|u|^2)$ in $Q_R$. 
    \item\textbf{Step 3.} Any minimizer $u$ is smooth and is a solution of $-\Delta u = u(1-|u|^2)$ in all of $B_R$. That is, it solves $-\Delta u = u(1-|u|^2)$ also across the faces of $Q_R$, across the axes, and at the origin.
    \item\textbf{Step 4.} The zero set of $u$ is exactly $X \cap B_R$. 
\end{itemize}
\begin{proof}[Proof of Step 1.] Since $E(\,\cdot \,, B_R) \ge 0$ is nonnegative, let $\{v_k \} \subset \widetilde H^1_g(B_R; \C)$ be a minimizing sequence. First, we claim that, up to replacing $v_k$ with another minimizing sequnce, we can assume $|v_k|\le 1$ and $(v_k)_1, (v_k)_2 \ge 0$ in $ Q_R$.

Indeed, replacing $v_k$ with $ w_k := |(v_k)_1|+i |(v_k)_2| $ in $Q_R$ and reflecting with the symmetries in Definition \ref{sobdef}, we get that $\{w_k\} \subset \widetilde H^1_g(B_R; \C)$ is still a minimizing sequence since $E(v_k, B_R)=E(w_k, B_R)$. Furthermore, replacing $w_k$ with $\widetilde w_k := w_k \min\{1, 1/|w_k|\} $ achieves $|\widetilde w_k| \le 1$ in $B_R$ and is still a minimizing sequence. Indeed, for the potential part of the Ginzburg-Landau energy 
\begin{equation}\label{rand3}
    \int_{B_R} \frac{(1-|\widetilde w_k|^2)^2}{4}  \le  \int_{B_R} \frac{(1-|w_k|^2)^2}{4} ,
\end{equation}
since $|\widetilde w_k| = \min\{ |w_k|, 1 \}$ by definition. For the Dirichlet part
\begin{equation}\label{rand4}
    \int_{B_R} |\nabla \widetilde{w}_k|^2 = \int_{B_R} |\nabla |\widetilde{w}_k||^2 + |\widetilde{w}_k|^2 \left|\nabla \frac{\widetilde{w}_k}{|\widetilde{w}_k|}\right|^2 \le \int_{B_R} |\nabla |w_k||^2 + |w_k|^2 \left|\nabla \frac{w_k}{|w_k|}\right|^2 = \int_{B_R} |\nabla w_k|^2.
\end{equation}

Putting together \eqref{rand4} and \eqref{rand3} we obtain that
\begin{equation*}
    E(\widetilde w_k, B_R) \le E(w_k, B_R) .
\end{equation*}
Hence $\{\widetilde w_k\}$ is also a minimizing sequence, and possibly replacing $v_k$ with $\widetilde w_k$ we assume that $v_k$ has $|v_k|\le 1$ and $(v_k)_1, (v_k)_2 \ge 0$ in $Q_R$.

Since $E(v_k, B_R) \le C  $ and $|v_k| \le 1$, we have that $\| v_k \|_{H^1(B_R; \C)} \le C(R)$ is uniformly bounded in $k$. Hence, there exists a subsequence (that we do not relabel) and $u \in H^1(B_R; \C)$ such that $v_k \rightharpoonup u$ weakly in $H^1(B_R)$, $v_k \to u $ in $L^2(B_R)$ and almost everywhere. Since $\widetilde H^1_g(B_R; \C)$ is a closed subspace of $H^1(B_R)$, we also have $u \in \widetilde H^1_g(B_R; \C)$. By the lower-semicontinuity of the seminorm under weak convergence and Fatou's lemma
\begin{equation*}
    E(u, B_R) \le \liminf_{k\to \infty} E(v_k, B_R) = \inf_{ v \in \widetilde H^1_g(B_R; \C)} E(v, B_R ) ,
\end{equation*}
thus $u$ is a minimizer in $\widetilde H^1_g(B_R; \C)$. Moreover, by the a.e. convergence of $v_k$ to $u$ we also get $|u|\le 1$ and $u_1, u_2\ge 0$ in $Q_R$. 
\end{proof}

\begin{proof}[Proof of Step 2.] Choose any $\varphi \in C^\infty_c( Q_R,\C)$. Reflecting $\varphi$ with the symmetries of Definition \ref{sobdef} produces a test function $ \Phi \in C^\infty_c(B_R,\C) \cap \widetilde H^1_0(B_R; \C )$. Hence for every $\ep \in (-1, 1) $ we have $u+\ep \Phi  \in \widetilde H^1_g(B_R; \C ) $, and by the minimality of $u$ in $\widetilde H^1_g(B_R; \C )$ we immediately get 
\begin{align*}
    0=\frac{d}{d\ep}\bigg|_{\ep=0} E(u +\ep \Phi , B_R) & =  \int_{B_R} \nabla u \cdot \nabla { \Phi }  +  \int_{B_R} (|u|^2-1) u \cdot {\Phi}  \\ & = 8 \left(  \int_{Q_R} \nabla u \cdot \nabla \varphi  +   \int_{Q_R} (|u|^2-1) u \cdot \varphi  \right) .
\end{align*}
    
    Hence, $u$ is a weak solution of $-\Delta u =u(1-|u|^2)$ in $Q_R$, and by standard elliptic regularity, $u$ is smooth and is a strong solution in $Q_R$.  
\end{proof}

\begin{proof}[Proof of Step 3.] 

Our goal is to show that $u$ satisfies the weak form of the full Ginzburg-Landau 
system on the whole ball $B_R$ with no symmetry restriction on the test functions. This follows from the Palais's Principle of Symmetric Criticality \cite{PalaisSymcrit} in the case of a $0$-dimensional finite Lie group. Our setting is so concrete and specific that we prefer giving a short proof of this fact explicitly instead of relying on this abstract principle. Once it is proved that $u$ is an unconstrained weak solution in the interior of $B_R$, standard interior elliptic regularity yields smoothness across the planes.

Fix $\varphi\in C_c^\infty(B_R;\mathbb C)$. We stress that we do not impose any symmetry on $\varphi$. Exploiting the symmetries of $u$ and many changes of variables, we find that
\begin{equation}\label{test_phi_psi}
\int_{B_R} \nabla u \cdot \nabla \varphi - (1-|u|^2) u \cdot \varphi = \int_{Q_R} \nabla u \cdot \nabla \Phi - (1-|u|^2)u\cdot \Phi,
\end{equation}
where
$$\Phi : =\varphi - \overline{\varphi}\circ \mathcal{R}_x -  \overline{\varphi}\circ \mathcal{R}_y + \overline{\varphi}\circ \mathcal{R}_z + \varphi\circ\mathcal{R}_x\circ\mathcal{R}_y-\varphi\circ\mathcal{R}_x\circ\mathcal{R}_z -\varphi\circ\mathcal{R}_y\circ\mathcal{R}_z +
\overline{\varphi}\circ\mathcal{R}_x\circ\mathcal{R}_y\circ\mathcal{R}_z.$$
Now we observe that $\Phi$ satisfies the symmetries in Definition~\ref{sobdef}, namely $\Phi \in \widetilde{H}^1 (B_R;\C)\cap C^\infty _c(B_R;\C)$, and hence it is an admissible variation to test the criticality of $u$. As a consequence, from (\ref{test_phi_psi}) we deduce that
$$\int_{B_R} \nabla u \cdot \nabla \varphi - (1-|u|^2)u\cdot \varphi=\frac{1}{8} \int_{B_R} \nabla u \cdot \nabla \Phi - (1-|u|^2)u\cdot \Phi = 0,$$
namely, $u$ is a weak solution of $\eqref{GL}$ in all of $B_R$, and by standard regularity theory, it is also a strong solution in all of $B_R$. This finishes the proof of Step 3.

\end{proof}

\begin{proof}[Proof of Step 4] 
Recall that every function $v\in \widetilde H^1_g(B_R; \C)$ vanishes on $X \cap B_R$. Hence, to prove Step 4, it is enough to show that $u$ does not vanish outside this set.  

Taking the real part in \eqref{GL equation} gives
    \begin{equation*}
        \mathcal{L} u_1 := \Delta u_1 + c(x)u_1 = 0  , \s \textnormal{in} \,\, Q_R ,
    \end{equation*}
   where $c(x)=1-|u|^2 \ge 0$ since $|u| \le 1$ in $Q_R$. Then, by the strong maximum princple $u_1>0$ in $Q_R$ unless $u_1 \equiv 0$ in $Q_R$, but since $u_1 = g_1 $ on $\partial^s Q_R$ and $g_1$ does not vanish identically on $\partial^s Q_R$, we get that $u_1>0$. Moreover, since $u_2 \ge 0$ in $Q_R$, a completely similar argument with the maximum principle implies that also $u_2>0$ in $Q_R$. 

    Now, assume by contradiction that there exists $x_\circ \in B_R \setminus X $ such that $u(x_\circ)=0$. We distinguish two cases. 

 \begin{itemize}[label={}, leftmargin=15pt]
\setlength\itemsep{3pt}

     \item\textbf{Case 1.} $x_\circ \in B_R \setminus  \{z= 0\} $. By the symmetries, we can assume $x_\circ \in B_R \cap  \{z> 0\} $. Since $u_2 \ge 0$ on $B_R \cap  \{z> 0\}$ and $\mathcal{L} u_2 = 0 $ on this set, by the strong maximum principle we would get $u_2 \equiv 0$ in $B_R \cap  \{z> 0\}$, which is impossible for our boundary datum $g$.

    \item\textbf{Case 2.} $x_\circ \in B_R \cap \big( \{z=0\} )\setminus X  \big)$. By the symmetries, we can assume that $ x_\circ \in \partial^z Q_R \setminus X$. Recall that $u_1 >0$ in $Q_R$. By the symmetries in the definition of $\widetilde H^1_g(B_R; \C)$ we have that $u_1$ is even across $\partial^z Q_R$ and $\partial_z u_1=0$ on $\partial^z Q_R$ by \eqref{rand1}. Since $0=u_1(x_\circ) < u_1(x)$ for every $x\in Q_R$, this contradicts Hopf's boundary point lemma (e.g., Lemma 3.4 in \cite{GilTru} applied to $-u_1$ and $\Omega=Q_R$).
    
\end{itemize}
This concludes the proof of Step 4. 
\end{proof}

 By Steps 1-4 above, we conclude the proof of Proposition \ref{existence}.

\end{proof}

\begin{remark} We could have obtained $u$ with the same properties as above with a different, unconstrained minimization problem with mixed Dirichlet-Neumann boundary conditions. Indeed, for $u\in H^1(Q_R; \C )$ consider the following set of boundary conditions on $\partial Q_R$:
\begin{equation}\label{dir}
    \begin{cases} u_1=0   , \,\,  \partial_\nu u_2 =0     &  \textnormal{on}\,\, \partial^x Q_R ,  \\ 
   u_1=0  , \,\, \partial_\nu u_2 =0    &  \textnormal{on}\,\, \partial^y Q_R ,  \\ u_2 =0  , \,\, \partial_\nu u_1 =0   &  \textnormal{on}\,\, \partial^z Q_R , \\  u=g &  \textnormal{on}\,\, \partial^s Q_R .
   \end{cases} 
\end{equation} 
Now minimize the energy $E(u, Q_R)$ over $u\in H^1(Q_R; \C )$ subject to \eqref{dir}. The standard direct method gives the existence of a minimizer $u$ defined on $Q_R$, which can be extended to a $\widetilde u \in \widetilde H^1(B_R; \C )$ reflecting according to the symmetries in Definition \ref{sobdef}. Then, it can be checked that this $\widetilde u$ has exactly the same properties stated for $u$ in Proposition \ref{existence}. In particular, $\widetilde u$ solves \eqref{GL equation} and has boundary datum $g$ on $\partial B_R$. 

Observe that from Remark~\ref{rem:symmetries} we deduce that our smooth solution $u_R$ obtained in Proposition \ref{existence} satisfies \eqref{dir}, which is just the combination of (\ref{eq: where u zero}) and (\ref{rand1}).

Hence, both approaches produce a function with the same properties. Nevertheless, being the Ginzburg-Landau energy $E(\cdot, B_R)$ not convex due to the nonconvex potential term, in both minimization problems (the one on $ \widetilde H^1_g(B_R; \C)$ and the one on $H^1(Q_R; \C )$ with mixed boundary conditions and then reflected according to the symmetries) the minimizer is generally not unique.
\end{remark}

\section{Construction of the boundary datum}\label{sec: g constr}

In this section, we need to work with differential operators on the sphere, so we first introduce some notation. We denote by $\widetilde D_R(\theta)$ a spherical disk on the sphere $\Sp^2$ with center $\theta \in \Sp^2 $ and geodesic radius $R$, and by $\nabla^\top $ the Riemannian gradient on the sphere.

Let us set
\begin{gather*}
    V:= \Sp^2 \cap X = \{\pm e_1, \pm e_2\},\\
d_V(\theta):=\dist_{\Sp^2}(\theta,V) \quad \forall \theta\in \Sp^2 \qquad \text{and}\qquad d_X(x):=\dist_{\R^3}(x,X)\quad \forall x\in\R^3.
\end{gather*}

The objective of this section is to construct a family of boundary data
\begin{equation*}
    g_R : S_R:=\partial B_R \to \R^2 , 
\end{equation*}
such that $|g_R|\leq 1$ and $(g_R)_1,(g_R)_2>0$ in $\partial^s Q_R$ (as required by Proposition~\ref{existence}), and satisfies the following properties.
\begin{itemize} 
\setlength\itemsep{5pt} 
    \item[$(i)$] $g_R$ is Lipschitz continuous on $S_R$ and satisfies the symmetries in Definition \ref{sobdef}.
    \item[$(ii)$] $g_R(x)=\min\{d_X(x),1\}g(x/|x|)$ for some $g \in \operatorname{Lip}_{\rm loc}(\Sp^2 \setminus V ; \Sp^1)$. 
    \item[$(iii)$] The degree of $g$ around $\pm e_1$ is $+1$, and the degree of $g$ around $\pm e_2$ is $-1$. 
    \item[$(iv)$] There exists a universal constant $C>0$ such that
     \begin{equation}\label{est:grad_g} 
    |\nabla^\top \ns g(\theta)|^2 \le \frac{1}{d_V(\theta)^2}+C\qquad \forall \theta\in \Sp^2.
\end{equation}
\end{itemize}

Let $\psi_1:\widetilde{D}_r(e_1)\to D_r\subset \C$ be geodesic coordinates such that
$$\psi_1(\widetilde{D}_r(e_1) \cap \{y=0,z>0\})=D_r\cap \{\Re =0 ,\Im>0\},$$
and
$$ \psi_1(\widetilde{D}_r(e_1) \cap \{z=0,y>0\})=D_r\cap \{\Im =0,\Re>0 \}.$$

We define $g$ in $\widetilde{D}_r(e_1)$ by
$$g(\theta)=\frac{\psi_1(\theta)}{|\psi_1(\theta)|}\qquad \forall \, \theta\in \widetilde{D}_r(e_1),$$
and we observe that it satisfies $g\circ \mathcal{R}_z= \overline{g}$ and $g\circ \mathcal{R}_y= -\overline{g}$ (we remark that $\widetilde{D}_r(e_1)$ is invariant under $\mathcal{R}_z$ and $\mathcal{R}_y$). Then we define $g$ also in $\widetilde{D}_r(-e_1)=-\widetilde{D}_r(e_1)$ by imposing the symmetry, namely by $g(\theta)=-\overline{g(\mathcal{R}_x \theta)}$ for every $\theta \in \widetilde{D}_r(-e_1)$. We observe that the degree of $g$ around $\pm e_1$ is equal to $+1$.

Similarly, we now fix geodesic coordinates $\psi_2:\widetilde{D}_r(e_2)\to D_r\subset \C$ such that
$$\psi_2(\widetilde{D}_r(e_2) \cap \{x=0,z>0\})=D_r\cap \{\Re =0,\Im>0\},$$
and
$$\psi_2(\widetilde{D}_r(e_2) \cap \{z=0,x<0\})=D_r\cap \{\Im =0,\Re>0 \}.$$

Then we set
$$g(\theta):=-\frac{\overline{\psi_2(\theta)}}{|\psi_2(\theta)|} \qquad \forall \, \theta\in \widetilde{D}_r(e_2),$$
and we observe that it satisfies $g\circ \mathcal{R}_z= \overline{g}$ and $g\circ \mathcal{R}_x= -\overline{g}$ (we remark that $\widetilde{D}_r(e_2)$ is invariant under $\mathcal{R}_z$ and $\mathcal{R}_x$). Again, we define $g$ also in $\widetilde{D}_r(-e_2)=-\widetilde{D}_r(e_2)$ by imposing the symmetry, namely by $g(\theta)=-\overline{g(\theta)}$ for every $\theta \in \widetilde{D}_r(-e_2)$. We observe that the degree of $g$ around $\pm e_2$ is equal to $-1$.

At this point the function $g$ is defined in four small balls around the four points in $V$, and it satisfies the symmetries in Definition~\ref{sobdef}.

Now we extend $g$ to $\Sp^2$ in such a way that it is Lipschitz continuous away from $V$ and still satisfies the symmetries. This can be done, for example, in the following way.

We consider the (closed) spherical octant $\overline{\partial^s Q_1}$, and we observe that $g$ has already been defined in $(\overline{\partial ^s Q_1} \cap \widetilde{D}_r(e_1)) \cup (\overline{\partial^s Q_1} \cap \widetilde{D}_r(e_2))$, that is the disjoint union of two quarters of disk in which $g$ is smooth away from $\{e_1,e_2\}$, and hence can be written as $g=e^{i\phi}$ for some real-valued phase $\phi$ that is smooth away from $\{e_1,e_2\}$. From the definition of $g$ we see that $\phi$ can be chosen to take values only in $[0,\pi/2]$, with $\phi=0$ where $z=0$ and $\phi=\pi/2$ where $x=0$ or $y=0$.

Now we observe that the boundary $\Gamma$ of the set $\overline{\partial ^s Q_1} \setminus (\widetilde{D}_r(e_1)\cup \widetilde{D}_r(e_2))$ in which $g$ still needs to be defined consists of the two small arcs $\gamma_1:=\Gamma\cap \partial\widetilde{D}_r(e_1)$ and $\gamma_2:=\Gamma\cap \partial\widetilde{D}_r(e_2)$, on which $g=e^{i\phi}$ has already been defined and is smooth, plus the three arcs $\gamma_x:=\Gamma\cap \{x=0\}$, $\gamma_y:=\Gamma\cap \{y=0\}$ and $\gamma_z:=\Gamma\cap \{z=0\}$.

If we now set
$$g=e^{i\phi}=\begin{cases}
i=e^{i\pi/2} &\text{on }\gamma_x\cup \gamma_y,\\
1=e^{i0} &\text{on }\gamma_z,
\end{cases}$$
then it turns out that $\phi$ is Lipschitz continuous on $\Gamma$, hence it can be extended to a Lipschitz continuous function $\phi:\overline{\partial ^s Q_1} \setminus (\widetilde{D}_r(e_1)\cup \widetilde{D}_r(e_2))\to [0,\pi/2]$, so that the map $g=e^{i\phi}$ belongs to $\operatorname{Lip}_{\rm loc}(\overline{Q_1}\setminus\{e_1,e_2\})$. Moreover, this extension can be made so that $0<\phi<\pi/2$ inside $\partial^s Q_1$, so that $g_1,g_2>0$ in $\partial^s Q_1$.

Finally, we extend $g$ to $\Sp^2\setminus V$ by reflection, imposing the symmetries of Definition~\ref{sobdef}. Since $g=i$ on $\overline{\partial ^s Q_1} \cap (\{x=0\}\cup \{y=0\})$ and $g=1$ on $\overline{\partial ^s Q_1}\cap \{z=0\}$, these reflections preserve the continuity across the planes $\{xyz=0\}$, so the resulting function is still Lipschitz continuous away from $V$, and in particular $|\nabla^\top \ns g|$ is bounded outside any neighborhood of $V$.

Moreover, inside the four disks $\widetilde{D}_r(e_1)$, $\widetilde{D}_r(e_2)$, $\widetilde{D}_r(-e_1)$, $\widetilde{D}_r(-e_2)$, the map $g$ behaves like the standard vortex map in the plane, up to a small error due to the geodesic coordinates. Therefore, since in the plane
$$\left|\nabla \left(\frac{\zeta}{|\zeta|}\right)\right|=\frac{1}{|\zeta|}\qquad \forall \, \zeta \in\C,$$
we deduce that
$$|\nabla ^\top \ns g(\theta)|^2=\frac{1}{|\psi(\theta)|^2} + O(1)=\frac{1}{d_V(\theta)^2} +O(1),$$
for $\theta\in\Sp^2$ near $V$, and this proves (\ref{est:grad_g}).

We can now set $g_R(x)=\min\{d_X(x),1\}g(x/|x|)$ and we observe that $g_R$ satisfies the symmetries in Definition~\ref{sobdef}. Moreover, since $d_X(x)\leq C R d_V(x/|x|)$ near $S_R\cap X$, then $g_R$ is Lipschitz continuous around $V$, and hence on the whole sphere $S_R$. Therefore, the maps $g_R$ and $g$ satisfy all the required properties.

\section{The entire solution}\label{sec: global sol}

Now we consider the solution $u_R$ provided by Proposition~\ref{existence} with the boundary datum $g_R:\partial B_R\to\C$ constructed in the previous section, and we wish to send $R \to +\infty$ to obtain a solution in all of $\R^3$. The next result states that this is actually works.

\begin{proposition}\label{prop: subsequence convergence} Let $u_R :B_R \to \C$ be the smooth minimizer of 
\begin{equation*}
     \min \Big\{ E(v, B_R ) \, : \,  v \in \widetilde H^1_{g_R}(B_R; \C) \Big\} 
\end{equation*}
given by Proposition \ref{existence}. Then, a discrete subsequence (not relabeled) satisfies
\begin{equation*}
    u_R \to u \,\, \mbox{ in } C^2_{\rm loc}(\R^3), 
\end{equation*}
where $u$ is a solution of \eqref{GL} in $\R^3$. Moreover
\begin{itemize}
\setlength\itemsep{3pt}
    \item[$(i)$] $|u|<1$ in $\R^3$ and $u_1, u_2 \ge 0$ in $ Q$. 
    \item[$(ii)$]  $u \not \equiv 0$ in $\R^3$ and $|u|>0$ in $\R^3 \setminus X$.
\end{itemize}
\end{proposition}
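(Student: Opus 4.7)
The plan is to combine elliptic compactness for $\{u_R\}$ with a perturbation argument based on the local minimality of the limit in the symmetric class $\widetilde H^1$. The main obstacle is the non-vanishing statement in (ii).

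First, since $|u_R|\le 1$ by Remark \ref{modleone}, the right-hand side $u_R(1-|u_R|^2)$ is uniformly bounded, so interior Schauder estimates give uniform $C^{2,\alpha}$ bounds on every compact subset of $\mathbb{R}^3$. A diagonal Arzel\`a--Ascoli extraction produces a subsequence $u_{R_k}\to u$ in $C^2_{\mathrm{loc}}(\mathbb{R}^3)$, and passing the properties of $u_R$ to the limit yields that $u$ solves \eqref{GL} in $\mathbb{R}^3$, inherits the symmetries of Definition \ref{sobdef}, satisfies $|u|\le 1$ and $u_1,u_2\ge 0$ on $Q$, and vanishes on $X$. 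The strict inequality $|u|<1$ in (i) then follows from the argument of Remark \ref{modleone}: $v:=|u|^2-1\le 0$ satisfies $-\Delta v+2|u|^2 v\le 0$ and is not identically zero since $v(0)=-1$, so the strong maximum principle forces $v<0$ everywhere.

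The key for (ii) is that $u$ inherits a \emph{local minimality in the symmetric class} from the $u_R$'s: for any $\varphi\in C_c^\infty(\mathbb{R}^3;\mathbb{C})\cap\widetilde H^1(\mathbb{R}^3;\mathbb{C})$ with $\mathrm{supp}(\varphi)\subset B_R$, the minimality of $u_R$ on $B_R$ implies $E(u_R+\varphi,\mathrm{supp}(\varphi))\ge E(u_R,\mathrm{supp}(\varphi))$, and passing to the $C^2_{\mathrm{loc}}$ limit yields $E(u+\varphi,\mathrm{supp}(\varphi))\ge E(u,\mathrm{supp}(\varphi))$. To contradict $u\equiv 0$, I take $\Phi$ to be the first Dirichlet eigenfunction of $-\Delta$ on $Q\cap B_L$, extended to $\mathbb{R}^3$ by the reflections of Definition \ref{sobdef}, so that $\Phi\in\widetilde H^1$ is real-valued and compactly supported. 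For $L$ large enough one has $\lambda_1(Q\cap B_L)<1$, and a direct expansion gives
\begin{equation*}
E(\epsilon\Phi,\mathrm{supp}(\Phi))-\frac{|\mathrm{supp}(\Phi)|}{4}=\frac{\epsilon^2}{2}(\lambda_1-1)\|\Phi\|_{L^2}^2+O(\epsilon^4)<0
\end{equation*}
for $\epsilon>0$ small, contradicting the local minimality of $u\equiv 0$.

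Finally, to upgrade $u\not\equiv 0$ to $|u|>0$ on $\mathbb{R}^3\setminus X$ I would mimic Step 4 of the proof of Proposition \ref{existence}. The components $u_1,u_2\ge 0$ each solve the linear equation $\Delta w+(1-|u|^2)w=0$ on $Q$, so the strong maximum principle gives the dichotomy: either $w>0$ throughout $Q$ or $w\equiv 0$ there. The degenerate cases $u_1\equiv 0$ or $u_2\equiv 0$ would force $|u|$ to vanish on an entire coordinate plane ($\{z=0\}$ or $\{xy=0\}$, respectively), which contains points outside $X$; each is excluded by a further perturbation of $u$ in the vanishing direction, exploiting that $1-|u|^2$ equals $1$ on that plane to realize a negative Rayleigh quotient for $-\Delta-(1-|u|^2)$ on a sufficiently large cylindrical region (a natural test function being $u_2'(z)f(x,y)$, whose 1D Rayleigh quotient can be computed explicitly via the Allen--Cahn first integral $(u_2')^2=(1-u_2^2)^2/2$ in the one-dimensional case). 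Once $u_1,u_2>0$ strictly in $Q$, even/odd extension across the coordinate planes and Hopf's boundary point lemma propagate positivity to all of $\mathbb{R}^3\setminus X$, exactly as in Proposition \ref{existence}.
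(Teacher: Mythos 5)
Your compactness step and the proof of $(i)$ match the paper's. For $u\not\equiv 0$ you take a genuinely different route: the paper invokes the (forward-referenced) sharp upper bound of Proposition \ref{prop: sharp energy ub}, which gives $E(u,B_\rho)=O(\rho\log\rho)=o(\rho^3)$, while the zero map has energy $|B_\rho|/4\sim c\rho^3$. Your alternative---passing the minimality inequality $E(u_R,\supp\varphi)\le E(u_R+\varphi,\supp\varphi)$ to the limit for a fixed compactly supported symmetric $\varphi$ (which, unlike the paper's Lemma on local minimality, needs no harmonic-extension correction since the competitor is $u+\varphi$ itself) and then exhibiting a negative second variation at $0$ via a Dirichlet eigenfunction of $Q\cap B_L$ reflected into $\widetilde H^1$---is correct: $Q$ contains arbitrarily large balls, so $\lambda_1(Q\cap B_L)<1$ for large $L$, and your expansion of $E(\epsilon\Phi,\cdot)$ is right. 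This is more self-contained than the paper's argument and avoids the forward reference.

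The gap is in the last step, namely the exclusion of the degenerate alternatives $u_1\equiv 0$ or $u_2\equiv 0$ in the strong maximum principle dichotomy. (This exclusion is genuinely needed: $u=i\tanh(z/\sqrt2)$ satisfies the PDE, the symmetries, the sign conditions and the gradient bound, so the degenerate case cannot be ruled out by soft arguments.) Your proposed instability argument does not close as written. The test function $u_2'(z)f(x,y)$ and the first integral $(u_2')^2=(1-u_2^2)^2/2$ are only available if $u_2$ is a one-dimensional Allen--Cahn profile, which is not known; for a general $u_2$ vanishing on $\{z=0\}$ all you control is $|u_2|\le m\,|z|$ with $m$ the universal gradient bound, so the potential $1-u_2^2$ is bounded below only on a slab of thickness $\sim 1/m$, and confining an admissible (odd in $x$ and $y$, compactly supported) test function to that slab costs a $\partial_z$-Rayleigh quotient of order $m^2$, which need not be smaller than the potential gain. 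The clean fix is the one implicitly used by the paper: if $u_1\equiv 0$ then $u$ vanishes on the entire plane $\{z=0\}$, so the clearing-out lemma (Theorem \ref{clearing out} at unit scale) applied at $\gtrsim\rho^2$ disjoint unit balls centered on $\{z=0\}\cap B_\rho$ forces $E(u,B_\rho)\gtrsim \rho^2$, contradicting the bound $E(u,B_\rho)=O(\rho\log\rho)$ of Proposition \ref{prop: sharp energy ub}; likewise for $u_2\equiv0$ and the planes $\{x=0\}\cup\{y=0\}$. Since you chose not to rely on Proposition \ref{prop: sharp energy ub}, you must either import it here or supply a genuinely different exclusion; the Rayleigh-quotient sketch is not one.
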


\begin{proof} By interior elliptic estimates (see for example Lemma A1 in \cite{BBH}) in $B_{R}$ we have 
\begin{equation}\label{eq534}
    \|\nabla u_R \|^2_{L^\infty(B_{R/2})} \le C \|u_R \|_{L^\infty(B_{R})}^2 \le C ,
\end{equation}
    for some universal constant $C>0$. Here we have used that $|u_R|\le 1$. Then, by Sobolev's embedding for some $p>3$ and interior Schauder estimates
    \begin{equation*}
        \| u_R\|_{C^{2,\alpha}(B_k)} \le C \| u_R\|_{C^{0,\alpha}(B_{2k})} \le C \| u_R\|_{W^{1,p}(B_{2k})} \le C , \s \forall \, k \in \mathbb{N}  \quad \forall \,  R\ge 4k, 
    \end{equation*}
    for some $C=C(k)>0$ independent of $R$. By the Arzelà-Ascoli theorem, a subsequence $\{u_{R_j} \} $ converges in $C^2(B_k)$ to a solution $u_k$ in $B_k$. Taking a further diagonal subsequence that we do not relabel, we obtain a subsequence converging in $C^2_{\rm loc}(\R^3)$ to a smooth solution $u\in C^2(\R^3)$ of \eqref{GL}. Note also that, since in \eqref{eq534} the constant $C$ is independent of $R$, letting $R \to \infty$ in \eqref{eq534} gives
    \begin{equation*}
        |\nabla u (x)| \le C , \,  \s \textnormal{for all} \,\, x\in \R^3 ,
    \end{equation*}
where $C$ is an absolute constant. 

    Since every $u_R$ given by Proposition \ref{existence} satisfies $|u_R|<1$ in $B_R$ and $(u_R)_1, (u_R)_2>0$ in $ Q_R$, it follows that $|u|\le 1$ and $u_1, u_2\ge 0$ in $ Q$. The fact that $|u|<1$ easily follows by the maximum principle. Indeed, arguing as in Remark \ref{modleone} by the strong maximum principle if $|u(x_\circ)|=1$ for some $x_\circ \in \R^3$, then $|u| \equiv  1$ everywhere. But since $u$ satisfies the symmetries in Definition \ref{sobdef} then $u=0$ on $X$, this is impossible if $|u| \equiv 1$. This proves $(i)$. 

\vsp 
Now we prove $(ii)$. Since the Ginzburg-Landau energy of $u\equiv 0$ in $B_\rho$ is $C\rho^3$, to show that $u \not \equiv 0$ it is enough to prove that $E(u, B_\rho) = o(\rho^3)$. This follows by Proposition \ref{prop: sharp energy ub} below, which is proved without using any information on $u$ apart from the fact that it is the limit (along a subsequence) in $C^2_{\rm loc}(\R^3)$ of $u_R$; hence $u \not \equiv 0$.

\vsp 
We are left to prove that $|u|>0$ in $\R^3 \setminus X$; but this follows by an argument identical to Step 4 in the proof of Proposition \ref{existence}. 

\end{proof}

\subsection{Further properties of the entire solution}
By $(ii)$ in Proposition \ref{prop: subsequence convergence}, we know that 
\begin{equation*}
    \deg\left(\frac{u}{|u|}, \sigma \right)
\end{equation*}
is well-defined for loops $\sigma$ not intersecting $X$.

\begin{lemma}\label{lem: degree of u} Let $\sigma: \Sp^1 \to \R^3 \setminus X$ be an oriented circle orthogonal to the $x$-axis or $y$-axis. Then 
\begin{equation*}
    \left| \deg\left(\frac{u}{|u|}, \sigma \right) \right| = 1 .  
\end{equation*}
\end{lemma}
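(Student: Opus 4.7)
The plan is to use the $C^2_{\rm loc}$ convergence $u_R \to u$ from Proposition~\ref{prop: subsequence convergence} to reduce the computation to the minimizer $u_R$, and then exploit homotopy invariance of the topological degree (available on $B_R \setminus X$, where $u_R$ is smooth and non-vanishing by Proposition~\ref{existence}) to relate the degree along $\sigma$ to the prescribed degree of the boundary datum $g_R$ constructed in Section~\ref*{sec: g constr}.

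First, I would pass from $u$ to $u_R$: by Proposition~\ref{prop: subsequence convergence}$(ii)$, $|u|>0$ on the compact set $\sigma$, and for $R$ large $\sigma \subset B_R$, so $u_R/|u_R| \to u/|u|$ uniformly on $\sigma$. Continuity of the degree under uniform $\Sp^1$-valued convergence then gives
\begin{equation*}
    \deg(u/|u|, \sigma) = \deg(u_R/|u_R|, \sigma).
\end{equation*}
Next I would reduce to a small loop on the boundary. Up to the symmetries of Definition~\ref{sobdef}, I may assume $\sigma$ encircles (once, positively) the positive $x$-axis. The open set $\Omega_+ := (B_R \cap \{x>0\}) \setminus X$ is an open half-ball with a diametric arc removed, and therefore deformation retracts onto a punctured disk, giving $\pi_1(\Omega_+) \cong \mathbb{Z}$ generated by any meridian loop around the positive $x$-axis. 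Hence $\sigma$ is homotopic in $\Omega_+$ to a small loop $\widetilde\sigma$ lying just inside $S_R$ and encircling $R e_1$. Since $\mathcal{Z}(u_R) = X \cap B_R$, homotopy invariance yields
\begin{equation*}
    \deg(u_R/|u_R|, \sigma) = \deg(u_R/|u_R|, \widetilde\sigma).
\end{equation*}

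Finally, further homotoping $\widetilde\sigma$ onto $S_R$ in the region where $u_R \ne 0$ (using continuity of $u_R$ up to the boundary and the fact that $g_R \neq 0$ on the resulting spherical loop, as it avoids $X \cap S_R$), I may replace $u_R/|u_R|$ with $g_R/|g_R| = g(\cdot/R)$ since $|g|=1$. By property~$(iii)$ of Section~\ref*{sec: g constr}, $g$ has degree $+1$ around $e_1$ on $\Sp^2$, so the degree equals $+1$. The analogous argument for $\sigma$ orthogonal to and encircling the $y$-axis produces degree $-1$ (from degree $-1$ of $g$ around $\pm e_2$), and by the symmetries of $g$ the case of the negative $x$-axis (respectively negative $y$-axis) gives the same value. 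In all cases $|\deg(u/|u|, \sigma)| = 1$.

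The main obstacle will be making the homotopy in Step~2 rigorous. One explicit way is to simultaneously translate the center of $\sigma$ along the positive $x$-axis toward $R e_1$ and shrink its radius, ensuring that each intermediate loop still encloses the single axis point in its plane (hence avoids $X$) and stays inside $B_R$. The topological input is clean (a half-ball minus a diametric arc has $\pi_1 \cong \mathbb{Z}$), but checking that every intermediate loop actually lies in $B_R \cap \{x>0\}$ and avoids $X$ is where the bookkeeping concentrates.
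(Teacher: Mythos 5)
Your proposal is correct and follows essentially the same route as the paper: reduce to $u_R$ via the uniform convergence on $\sigma$ and the non-vanishing of $u,u_R$ off $X$, then homotope $\sigma$ within the zero-free region to a loop on $\partial B_R$ where the degree is read off from property $(iii)$ of $g$. The only cosmetic difference is the homotopy itself — the paper uses the radial (conical) projection of $\sigma$ onto $\partial B_R$, whose cone visibly stays in $\{x>0\}\setminus X$, which avoids the bookkeeping you flag at the end.
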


\begin{proof}
By the symmetries of $u$ and $g$, it suffices to consider the case where $\sigma$ is orthogonal to the $x$-axis; using the symmetry $x\mapsto -x$, we may also assume $\sigma\subset\{x>0\}$.

Fix $R$ large so that $\sigma \Subset B_R$, and let $u_R$ be the sequence in Proposition \ref{prop: subsequence convergence} converging to the entire solution $u$. Since $u_R\to u$ in $C^2_{\rm loc}(\R^3)$, we have uniform convergence on $\sigma$; together with $|u|,|u_R|>0$ on $(\R^3\setminus X)\cap B_R$ gives that
\begin{equation*}
    \deg \left(\frac{u}{|u|},\sigma\right)=\deg \left(\frac{u_R}{|u_R|},\sigma\right),
\end{equation*}
for $R$ large. 

Let
\begin{equation*}
    \sigma_R \coloneqq \{\,t p:\ t>0,\ p\in \sigma\,\}\cap \partial B_R
\end{equation*}
be the radial (conical) projection of $\sigma$ to $\partial B_R$. The cone joining $\sigma$ to $\sigma_R$ lies in $\{x>0\}\setminus X$, so by homotopy invariance of the degree in the zero-free region of $u_R$, and by the continuity of $u_R$ up to the boundary (which follows from the continuity of $g_R$) we have
\begin{equation*}
    \deg\left(\frac{u_R}{|u_R|}, \sigma \right) = \deg\left(\frac{u_R}{|u_R|}, \sigma_R \right) = \deg\left(\frac{g}{|g|}, \sigma_R \right) = \pm 1 ,
\end{equation*}
based on the orientation of $\sigma_R$, for $R$ sufficiently large. This concludes the proof.
\end{proof}

   A similar argument to the proof of Lemma \ref{lem: degree of u} provides the degree of $u$ for every loop $\sigma \subset \R^3 \setminus X$, and results in 
   \begin{equation*}
        \left|\deg \left( \frac{u}{|u|} , \sigma \right)\right| = \operatorname{link}(\sigma,\Gamma_1)+\operatorname{link}(\sigma,\Gamma_2), 
   \end{equation*}
   where $\operatorname{link}(\cdot,\cdot)$ denotes the linking number and $\Gamma_1,\Gamma_2:\R\to\R^3$ are the lines (with a corner)
   $$\Gamma_1(t):=\begin{cases}
       (0,t,0)&\text{if }t\leq 0 ,\\
       (t,0,0)&\text{if }t\geq 0 , 
       \end{cases}
       \qquad \text{and} \qquad
       \Gamma_2(t):=\begin{cases}
       (0,-t,0)&\text{if }t\leq 0 , \\
       (-t,0,0)&\text{if }t\geq 0 . 
       \end{cases}.$$

Equivalently, one could prove that the distributional Jacobian of $u/|u|$ is (up to the usual identification of 2-forms with 1-currents via the Hodge $\star$ operator) the $1$-current induced by $\Gamma_1$ and $\Gamma_2$, with constant multiplicity equal to $\pi$.

\begin{lemma}
     The entire solution given by Theorem \ref{mainteo} is a local minimizer (i.e. in compact subsets with its own boundary datum) among competitors with the symmetries in Definition \ref{sobdef}. 
\end{lemma}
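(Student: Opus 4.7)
The plan is to argue by contradiction, exploiting that $u$ arises in Proposition~\ref{prop: subsequence convergence} as a $C^2_{\rm loc}$ limit of the symmetric minimizers $u_R$ on $B_R$.

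Assume the conclusion fails. Then there exist $\rho>0$ and a symmetric competitor $v\in\widetilde{H}^1(B_\rho;\C)$ sharing the trace of $u$ on $\partial B_\rho$ and satisfying $E(v,B_\rho)<E(u,B_\rho)$. As a first step, I would extend $v$ to all of $\R^3$ by setting $v\equiv u$ outside $B_\rho$, so that $v-u$ has compact support in $B_\rho$ and the extension still respects the symmetries of Definition~\ref{sobdef}.

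Next, I would fix $\rho'>\rho$ and a radial cutoff $\chi\in C^\infty(\R^3;[0,1])$ with $\chi\equiv 0$ on $B_\rho$ and $\chi\equiv 1$ outside $B_{\rho'}$. For each $R>\rho'$ along the convergent subsequence, I would form the competitor
\begin{equation*}
\widetilde v_R := (1-\chi)\,v + \chi\,u_R .
\end{equation*}
Because $\chi$ is radial and both $v$ and $u_R$ obey the symmetries of Definition~\ref{sobdef}, so does $\widetilde v_R$; and since $\widetilde v_R=u_R=g_R$ on $\partial B_R$, we have $\widetilde v_R\in\widetilde{H}^1_{g_R}(B_R;\C)$, so the minimality of $u_R$ (Proposition~\ref{existence}) yields $E(u_R,B_R)\leq E(\widetilde v_R,B_R)$.

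The observation that would close the argument is that on the annulus $A:=B_{\rho'}\setminus B_\rho$ we have $v\equiv u$, hence $\widetilde v_R=u+\chi(u_R-u)$, so the $C^2_{\rm loc}$ convergence $u_R\to u$ forces $\widetilde v_R\to u$ in $C^1(A)$, and therefore both $E(\widetilde v_R,A)$ and $E(u_R,A)$ converge to $E(u,A)$. Splitting the minimality inequality as
\begin{equation*}
E(u_R,B_\rho) + E(u_R,A) \leq E(v,B_\rho) + E(\widetilde v_R,A)
\end{equation*}
and sending $R\to\infty$ produces $E(u,B_\rho)\leq E(v,B_\rho)$, contradicting our assumption. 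The only technical point to monitor is that the interpolation error on the annulus be genuinely negligible, but this is precisely what the strong $C^2_{\rm loc}$ convergence from Proposition~\ref{prop: subsequence convergence} provides; no subtler estimate is needed.
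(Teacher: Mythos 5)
Your argument is correct, and it follows the same overall strategy as the paper's proof: use the minimality of $u_R$ among symmetric competitors together with the $C^2_{\rm loc}$ convergence $u_R\to u$ from Proposition~\ref{prop: subsequence convergence} to pass the minimality to the limit. The only genuine difference is the device used to reconcile the mismatched boundary data. The paper keeps the comparison confined to the ball and corrects the competitor $v$ \emph{inside} $B_r$ by adding the harmonic extension $U_R$ of $(u_R-u)|_{\partial B_r}$, which tends to $0$ in $H^1(B_r)$; you instead glue $v$ to $u_R$ across an annulus $B_{\rho'}\setminus B_\rho$ on which $v\equiv u$, using a radial cutoff, and let the annular energies cancel in the limit. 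Your route avoids invoking the boundedness of the harmonic extension operator $H^{1/2}(\partial B_r)\to H^1(B_r)$ and the Poincar\'e inequality, at the mild cost of tracking the energy on the transition annulus, which you correctly observe converges to $E(u,A)$ on both sides of the inequality thanks to $C^1(\overline{A})$ convergence of $\widetilde v_R$ and $u_R$ to $u$. The two points that need to be (and are) checked are that the radial cutoff preserves the symmetries of Definition~\ref{sobdef} — these are real-linear constraints with radially invariant coefficients, so the convex combination $(1-\chi)v+\chi u_R$ inherits them — and that the traces match across $\partial B_\rho$ and $\partial B_{\rho'}$, so that $\widetilde v_R\in \widetilde H^1_{g_R}(B_R;\C)$ is an admissible competitor in Proposition~\ref{existence}. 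The proof is complete; the contradiction framing is cosmetic, since the same computation proves $E(u,B_\rho)\le E(v,B_\rho)$ directly.
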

\begin{proof} Clearly it suffices to show that $u$ is locally minimizing in every ball $B_r$ for $r>0$. In what follows, $C>0$ denotes a general constant that depends only on $r$.

Recall that $u$ given by Theorem \ref{mainteo} is the limit in $C^{2}_{\rm loc}(\R^3)$ of the minimizers $u_R$ of Proposition~\ref{existence}. Since $u_R$ is a minimizer among symmetric competitors that agree with $u_R$ on $\partial B_r$, for $R \ge r$ we have that 
\begin{equation}\label{eq: u_R min in compact}
    E(u_R, B_r) \le E(w,B_r) , \quad \,\,  \forall \,  w\in \widetilde H^1(B_r; \C) \mbox{ with } w=u_R \mbox{ on } \partial B_r.  
\end{equation}

Let $v$ be any symmetric competitor of $u$ on $B_r$ with $v=u$ on $\partial B_r$. Let $U_R$ be the component-wise harmonic extension of $(u_R-u)|_{\partial B_r}$ to $B_r$. It can be checked that $U_R$ satisfies the correct symmetries since the harmonic extension is unique. In particular, since $u_R \to u$ in $C^2(B_r)$ we have that $u_R \to u $ in $H^{1/2}(\partial B_r )$, which implies 
\begin{equation*}
    \int_{B_r} |\nabla U_R|^2 =  C  [u_R-u]^2_{H^{1/2}(\partial B_r)} \to 0 .  
\end{equation*}
Moreover, by the Poincaré inequality 
\begin{equation*}
    \int_{B_r} |U_R|^2 \le 2 \int_{B_r} |U_R-(u_R-u)|^2 + 2 \int_{B_r} |u_R-u|^2 \le C \int_{B_r} |\nabla U_R|^2 +  C \|u_R-u\|^2_{H^1(B_r)} \to 0 . 
\end{equation*}
Hence $ U_R \to 0 $ in $ \widetilde H^1(B_r; \C)$ as $R \to \infty$. 

Observe that $v+U_R$ equals $u$ on $\partial B_r$ by construction. Testing \eqref{eq: u_R min in compact} with $w=v+U_R$ gives 
\begin{equation*}
    E(u_R, B_r) \le E(v+U_R, B_r) . 
\end{equation*}
By $C^2$ convergence, $E(u_R, B_r) \to E(u, B_r)$. Moreover, by $H^1$ convergence of $U_R$ to zero and continuity of the potential term, we have that $E(v+U_R, B_r) \to E(v, B_r)$. Letting $R\to \infty$ gives $E(u, B_r) \le E(v, B_r)$ as desired. 
\end{proof}

\begin{remark}
We point out that $u$ is not an unconstrained local minimizer. Indeed, if $u$ were a local minimizer, then the same would be true for the blow-downs $u_r(x):=u(rx)$, so the convergence results for local minimizers in \cite{LinRiv1,ABO} would imply that the energy densities (\ref{eq: norm en dens}) concentrate on the support of an area minimizing current.

Instead, Corollary~\ref{cor: blow down} states that the limit density is supported on $X\cap B_1$, which is not the support of an area minimizing current, since two oblique segments with the same boundary have a lower length.
\end{remark}

\section{Sharp energy asymptotics}\label{sec: sharp energy asymptotics}

In this section, we aim to prove \eqref{eq: main energy bound for u}, that is
\begin{equation*}
    \lim_{r \to \infty} \frac{E(u, B_r)}{r \log(r)} = 4\pi ,
\end{equation*}
for the solution $u:\R^3 \to \C$ constructed in Section \ref{sec: global sol}. In terms of the normalized energy densities, this implies that the limit in $B_1$ is the cross $X\cap B_1$ with density $\theta=1$, namely
\begin{equation*}
   \frac{e_{1/r}(u_r)}{\pi \log(r)} \rightharpoonup  \mathcal{H}^1 \mres  ( X\cap B_1)  \quad \mbox{in }  C^0_c(B_1)
^*.
\end{equation*}

\subsection{Upper bound}
Before proving the optimal upper bound for the energy of $u$, we need an estimate for the energy of a natural competitor for the minimization problem (\ref{minprob}) solved by $u_R$. This also provides a sharp estimate for the total energy of $u_R$ in $B_R$.

\begin{proposition}\label{prop: en bound in Br}
    Let $g :\Sp^2\to \C $ be the map constructed in Section \ref{sec: g constr}, let $m>0$ be a positive real number and let us set
    $$v_m(x):=\min\{m\cdot d_X(x),1\}g\left(\frac{x}{|x|}\right) \qquad \forall x\in \R^3.$$
    
    Then, for every $r_2>r_1\geq 0$ it holds that
    $$E(v_m, B_{r_2} \setminus B_{r_1})\leq 4\pi(r_2-r_1)\log r_2 + C(m)r_2,$$
    where $C(m)>0$ is a constant depending only on $m$.
\end{proposition}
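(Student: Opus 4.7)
The plan is to write $v_m = fG$ with
\begin{equation*}
  f(x):=\min\{m\,d_X(x),1\}\in[0,1],\qquad G(x):=g(x/|x|),
\end{equation*}
so that $|G|\equiv 1$ on $\R^3\setminus X$. Since $\nabla|G|^2\equiv 0$, one has pointwise a.e.\ $|\nabla v_m|^2=|\nabla f|^2+f^2|\nabla G|^2$. The ``cutoff'' pieces are easy: on the tube $T:=\{d_X<1/m\}\cap (B_{r_2}\setminus B_{r_1})$ we have $|\nabla f|\le m$ and $(1-|v_m|^2)^2\le 1$, while $|T|\le C r_2/m^2$ because $X$ is a union of two lines; so the $|\nabla f|^2$ and potential contributions to the energy are at most $C(m)r_2$.

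For the main piece I would pass to polar coordinates $x=r\theta$: since $G$ is radially constant, $|\nabla G(x)|^2 = |\nabla^\top g(\theta)|^2/r^2$, and the $r^2$ in the spherical Jacobian cancels, yielding
\begin{equation*}
  \frac{1}{2}\int_{B_{r_2}\setminus B_{r_1}} f^2|\nabla G|^2\,dx = \frac{1}{2}\int_{r_1}^{r_2}\!\int_{\Sp^2} f(r\theta)^2|\nabla^\top g(\theta)|^2\,d\theta\,dr.
\end{equation*}
By property (iv), the bounded $+C$ part of $|\nabla^\top g|^2$ contributes $C(r_2-r_1)$. For the singular $1/d_V^2$ part, I would split $\Sp^2$ into the four geodesic disks $\widetilde D_\rho(v)$, $v\in V$, and a complement on which $d_V\ge\rho$ (giving $C_\rho(r_2-r_1)$). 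Near each $v\in V$, say $v=e_1$, in geodesic polar coordinates $(s,\omega)$ around $e_1$ one has $r\theta=r\cos s\,e_1+r\sin s\,\omega$, so the distance from $r\theta$ to the $x$-axis is $r\sin s$, giving $d_V(\theta)=s$ and $f(r\theta)=\min\{mr\sin s,1\}$. Together with $d\theta=\sin s\,ds\,d\omega$ this yields
\begin{equation*}
  \int_{\widetilde D_\rho(e_1)}\frac{f(r\theta)^2}{d_V(\theta)^2}\,d\theta = 2\pi\int_0^\rho \frac{\min\{mr\sin s,1\}^2\sin s}{s^2}\,ds \le 2\pi\log(mr\rho)+C,
\end{equation*}
the last inequality being the standard vortex computation obtained by splitting at $s_0$ with $\sin s_0=1/(mr)$: on $[0,s_0]$ the integrand is $\le(mr)^2 s$ with integral $O(1)$, and on $[s_0,\rho]$ it is $\le 1/s$ with integral $\le\log(mr\rho)+C$. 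Summing the four equal contributions of $V$, multiplying by $\tfrac12$, and integrating in $r$ using the trivial estimate $\int_{r_1}^{r_2}\log r\,dr\le (r_2-r_1)\log r_2$ (because $\log r\le \log r_2$ on $[r_1,r_2]$), I obtain the leading term $4\pi(r_2-r_1)\log r_2$ plus terms absorbed in $C(m)r_2$.

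The main obstacle is the sharp constant $4\pi$: it must arise as the product $4\cdot 2\pi\cdot\tfrac12$ of the four vortex points in $V$, the angular $2\pi$ from the geodesic polar integration around each, and the Dirichlet factor $\tfrac12$, so each ingredient has to be optimal and the estimate (iv) on $|\nabla^\top g|^2$ has to be used at leading order. Conceptually the result is expected: near each $v\in V$, $g$ is modeled on a planar degree-$\pm 1$ vortex whose $\pi|\log|$ energy per cross-section is lifted along each of the two lines of $X$, giving exactly the $4\pi(r_2-r_1)\log r_2$ leading growth.
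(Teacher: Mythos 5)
Your proof is correct and follows essentially the same route as the paper's: isolate the modulus and potential contributions on the tube $\{d_X<1/m\}$ (all of order $C(m)r_2$), and estimate the dominant phase term in spherical polar coordinates via property (iv), with the $8\pi\log r$ per sphere arising from the four $1/d_V^2$ singularities regularized at scale $1/(mr)$. The only cosmetic difference is that the paper evaluates the spherical integral with the coarea formula and the curvature bound $\mathcal{H}^1(\{d_V=t\})\le 8\pi t$, whereas you use explicit geodesic polar coordinates around each point of $V$; both yield the same sharp constant $4\pi$.
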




\begin{proof}
Let us set, for brevity, $A:=B_{r_2} \setminus B_{r_1}$ and, for every $\delta>0$, let $\N_\delta:=\{d_X\leq \delta\}$.

Then 
\begin{align}
   E(v_m , A) & = E(v_m, A \setminus \N_{1/m} ) + E(v_m, A \cap \N_{1/m})  \nonumber  \\
   &=  \frac{1}{2} \int_{A \setminus \N_{1/m}} \frac{|\nabla^\top \ns g |^2 \big(\tfrac{x}{|x|} \big) }{|x|^2}  + \frac{1}{2} \int_{A \cap \N_{1/m}}  m^2|\nabla g d_X + g \nabla d_X |^2 + \int_{A \cap \N_{1/m}} \frac{(1-m^2d_X^2)^2}{4} \nonumber\\
   & \le  \frac{1}{2} \int_{A \setminus \N_{1/m}} \frac{|\nabla^\top \ns g |^2\big(\tfrac{x}{|x|} \big)}{|x|^2}  + m^2 \int_{A \cap \N_{1/m}} \frac{|\nabla^\top \ns g|^2\big(\tfrac{x}{|x|} \big) d_X^2}{|x|^2} + \left(m^2+\frac{1}{4}\right)|A \cap \N_{1/m}| \nonumber \\
   & \le  \frac{1}{2} \int_{A\setminus \N_{1/m}} \frac{|\nabla^\top \ns g |^2\big(\tfrac{x}{|x|} \big)}{|x|^2} +  m^2\int_{A \cap \N_{1/m}} \frac{|\nabla^\top \ns g|^2\big(\tfrac{x}{|x|} \big) d_X^2}{|x|^2} + Cr_2,   \label{eq: competitors eq 1}
\end{align}
where here and in the rest of the proof $C$ denotes a positive constant that may depend only on $m$, and whose precise value may vary from line to line.

We observe that 
\begin{equation}\label{eq: point bound on g and dX}
  \frac{1}{|x|^2} |\nabla^\top \ns g|^2 \ns \left( \frac{x}{|x|}\right) d_X^2(x)  =   |\nabla^\top \ns g|^2 \ns \left( \frac{x}{|x|}\right) d_X^2\left( \frac{x}{|x|}\right)  \le \left( \frac{1}{d_V^2(x/|x|)} + C \right) d_X^2\left( \frac{x}{|x|}\right)  \le C , 
\end{equation}
where we have used (\ref{est:grad_g}) and that
\begin{equation*}
    \frac{d_X(\theta)}{d_V(\theta)} \le C , \s \mbox{for every } \theta \in \Sp^2. 
\end{equation*}
Hence 
\begin{equation*}
     \int_{A \cap \N_{1/m}} \frac{|\nabla^\top \ns g|^2\big(\tfrac{x}{|x|} \big) d_X^2}{|x|^2} \le C|A \cap \N_{1/m}| \le Cr_2. 
\end{equation*}

Lastly, we have to estimate the first term in \eqref{eq: competitors eq 1}, which accounts for the highest order term in the energy. In polar coordinates, write 
\begin{equation}\label{eq: polar}
    \frac{1}{2} \int_{A \setminus \N_{1/m}} \frac{|\nabla^\top \ns g |^2\big(\tfrac{x}{|x|} \big)}{|x|^2} = \frac{1}{2} \int_{r_1}^{r_2} d\rho \int_{\partial B_1 \setminus \N_{1/(m\rho)}}  |\nabla^\top \ns g |^2(\theta) \, d \mathcal{H}^2(\theta) . 
\end{equation}
By \eqref{est:grad_g} and the coarea formula for every $\rho>2/(m\pi)$ we have
\begin{align*}
    \int_{\partial B_1 \setminus \N_{1/(m\rho)}} |\nabla^\top \ns g|^2(\theta) \, d \mathcal{H}^2(\theta) & \le \int_{ \partial B_1 \setminus \N_{1/(m\rho)} } \frac{1}{d_V(\theta)^2} \, d \mathcal{H}^2(\theta) + C \\
    &\leq \int_{1/(m\rho)}^{\pi/2} \frac{1}{t^2} \mathcal{H}^1(\{ \theta \in \Sp^2 :  d_V(\theta)=t\}) \, dt + C,
\end{align*}
while of course the left-hand side vanish if $\rho<2/(m\pi)$ (actually it is enough that $\rho<1/m$).

Since the sphere $\Sp^2$ is positively curved, for $t\in (0,\pi/2)$ it holds 
\begin{equation*}
    \mathcal{H}^1(\{ \theta \in \Sp^2 :  d_V(\theta)=t\}) \le 2\pi t \cdot 4 = 8\pi t . 
\end{equation*} 
Thus 
\begin{equation*}
    \int_{\partial B_1 \setminus \N_{1/(m\rho)}}  |\nabla^\top \ns g|^2(\theta) \, d \mathcal{H}^2(\theta) \le 8 \pi \int_{1/(m\rho)}^{\pi/2} \frac{1}{t} \, dt +  C = 8 \pi \log(m\rho) + C\leq 8 \pi \log(\rho) + C, 
\end{equation*}
and using this bound in \eqref{eq: polar} gives
\begin{equation*}
    \frac{1}{2} \int_{A \setminus \N_{1/m}} \frac{|\nabla^\top \ns g |^2\big(\tfrac{x}{|x|} \big)}{|x|^2} \le \frac{1}{2} \int_{\max\{r_1,2/(m\pi)\}}^{r_2} \big( 8 \pi \log(\rho) + C\big) \, d\rho \le 4\pi (r_2-r_1) \log(r_2) + Cr_2. 
\end{equation*}

Putting all the estimates together, we obtain the desired bound. 
\end{proof}

Since $v_1\in \widetilde{H}^{1}_{g_R}(B_R ; \C)$, the previous result immediately yields the following.

\begin{corollary}
Let $g_R \in \widetilde{H}^{1/2}(\partial B_R ; \C)$ be the boundary datum constructed in Section~\ref{sec: g constr}, and let $u_R$ be a minimizer provided by Proposition~\ref{existence}. Then
   \begin{equation}\label{minprob with gcirc}
     E(u_R, B_R)=\min \Big\{ E(v, B_R ) \, : \, v \in \widetilde H^1_{g_R}(B_R; \C) \Big\} \le 4\pi R\log(R) + O(R).
\end{equation}
\end{corollary}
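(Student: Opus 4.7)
The plan is to exhibit the explicit map $v_1(x) := \min\{d_X(x),1\}\,g(x/|x|)$ from Proposition \ref{prop: en bound in Br} as an admissible competitor in the minimization problem \eqref{minprob with gcirc}, and then apply that proposition with $m=1$, $r_1=0$, $r_2=R$ to read off the claimed bound.

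First I would check admissibility, i.e.\ that $v_1 \in \widetilde H^1_{g_R}(B_R;\C)$. The boundary trace matches automatically by construction: on $\partial B_R$, the formula defining $v_1$ coincides with the formula defining $g_R$ in Section \ref{sec: g constr}, so $v_1|_{\partial B_R}=g_R$. The symmetries of Definition \ref{sobdef} are inherited from $g$, which was built to satisfy them, combined with the fact that both $d_X$ and $|x|$ are invariant under each coordinate reflection $\mathcal R_x,\mathcal R_y,\mathcal R_z$; hence the reflections only act on the factor $g(x/|x|)$ and produce exactly the conjugation rules required. Finally, the finiteness of the $H^1$ norm of $v_1$ is a byproduct of the energy estimate of Proposition \ref{prop: en bound in Br}.

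Once admissibility is established, the minimality of $u_R$ gives $E(u_R,B_R) \le E(v_1,B_R)$, and Proposition \ref{prop: en bound in Br} with $m=1$, $r_1=0$, $r_2=R$ yields
\begin{equation*}
    E(v_1,B_R) \le 4\pi R\log(R) + C R,
\end{equation*}
which is precisely the asserted bound $E(u_R,B_R) \le 4\pi R\log(R)+O(R)$. Since the entire argument is a direct consequence of already-established facts, I do not expect any genuine obstacle; the only point worth double-checking is the compatibility of the boundary trace and the symmetries, which as described above is immediate from the construction in Section \ref{sec: g constr}.
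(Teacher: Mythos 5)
Your proposal is correct and is exactly the paper's argument: the paper observes that $v_1\in \widetilde H^1_{g_R}(B_R;\C)$ (its trace on $\partial B_R$ is $g_R$ by construction and the symmetries are inherited from $g$) and applies Proposition~\ref{prop: en bound in Br} with $m=1$, $r_1=0$, $r_2=R$. Your write-up merely makes explicit the admissibility checks that the paper leaves implicit.
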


As we explained in the introduction, the estimate (\ref{minprob with gcirc}) does not propagate directly to smaller scales, because of the different dependence on the radius in the monotonicity formula.

However, we can obtain the sharp upper estimate by a direct comparison argument for the minimizers $u_R$ on $B_R$ which, after letting $R\to\infty$ along the subsequence provided by Proposition \ref{prop: subsequence convergence}, gives the claimed bound for the entire solution $u$. Of course, this argument exploits in an essential way the local minimality property of $u_R$, so it could not be applied to general critical points, as opposed to the monotonicity formula, which works for every solution of (\ref{GL}).

\begin{proposition}\label{prop: sharp energy ub} Let $u$ be the entire solution of Theorem \ref{mainteo} (that is, the one given by Proposition \ref{prop: subsequence convergence}). Then 
\begin{equation*}
    \limsup_{r \to \infty} \frac{E(u, B_r)}{r \log r } \le 4\pi. 
\end{equation*}
\end{proposition}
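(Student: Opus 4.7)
The plan is to establish the sharp upper bound via a comparison argument at finite $R$, passage to the limit $R\to\infty$, and an iterative refinement. Fix $r > 0$ and a small $\delta \in (0,1)$, and take $R \gg r$. Since $u_R$ is a minimizer in $\widetilde H^1_{g_R}(B_R;\C)$ by Proposition~\ref{existence}, it is also a minimizer on the sub-ball $B_{(1+\delta)r}$ among symmetric competitors with its own boundary trace. I would compare it against
\begin{equation*}
w(x) := \chi(|x|)\, v_m(x) + (1-\chi(|x|))\, u_R(x),
\end{equation*}
where $v_m$ is the test field from Proposition~\ref{prop: en bound in Br} for some $m > 0$, and $\chi$ is a smooth radial cutoff with $\chi \equiv 1$ on $B_r$, $\chi \equiv 0$ outside $B_{(1+\delta)r}$. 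Since $v_m$ obeys the symmetries of Definition~\ref{sobdef}, so does $w$, and it agrees with $u_R$ on $\partial B_R$; minimality then yields
\begin{equation*}
E(u_R, B_r) \le E(v_m, B_r) + \bigl[E(w,A) - E(u_R,A)\bigr], \quad A := B_{(1+\delta)r} \setminus B_r,
\end{equation*}
with $E(v_m, B_r) \le 4\pi r \log r + Cr$ by Proposition~\ref{prop: en bound in Br}.

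The key step is controlling the bracketed quantity. Expanding $|\nabla w|^2$ and using the identity $1 - |w|^2 = \chi(1-|v_m|^2) + (1-\chi)(1-|u_R|^2) + \chi(1-\chi)|v_m-u_R|^2$, the energy $E(w,A)$ decomposes into inherited contributions controlled by $E(v_m, A) + E(u_R, A)$ (Proposition~\ref{prop: en bound in Br} handles the first), a radial cutoff term of order $(\delta r)^{-2}\int_A |v_m - u_R|^2$, and a phase-difference potential contribution of order $\int_A |v_m - u_R|^4$. A brute-force use of $|v_m - u_R| \le 2$ would contribute $O(r/\delta) + O(\delta r^3)$, which cannot be absorbed; instead I would select a good radius $\rho_0 \in [r, (1+\delta)r]$ with $\int_{\partial B_{\rho_0}} |\nabla u_R|^2 \le 2(\delta r)^{-1} E(u_R, B_{(1+\delta)r})$, and combine trace and Cauchy--Schwarz inequalities to obtain the announced scaling
\begin{equation*}
E(u_R, B_r) \le 4\pi r \log r + Cr + C\sqrt{r\, E(u_R, B_{(1+\delta)r})} .
\end{equation*}

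Passing $R \to \infty$ along the subsequence of Proposition~\ref{prop: subsequence convergence} (via $C^2_{\rm loc}$ convergence) transfers this inequality to $u$. To conclude, I would apply the iteration Lemma~\ref{lem: growth reabsorbing lemma} to $f(r) := E(u, B_r)$: starting from the crude a priori bound $f(r) \le Cr^3$ (from the uniform estimates $|u|, |\nabla u| \le C$ established in Proposition~\ref{prop: subsequence convergence}), the self-improving recursion $f(r) \le 4\pi r \log r + C\sqrt{r f((1+\delta)r)}$ lowers the polynomial exponent of the error term at each step, eventually yielding $f(r) \le 4\pi r \log r + O(r\sqrt{\log r}) = (4\pi + o(1))\, r \log r$. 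The main obstacle is the annulus estimate itself: the cutoff and potential terms must be analyzed simultaneously since the convex-combination competitor is not pointwise close to $u_R$, and this forces the adapted good-radius selection together with a careful design of $v_m$ whose phase matches the degrees of $u_R$ around $X$ (cf.\ Lemma~\ref{lem: degree of u}).
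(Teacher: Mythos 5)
Your overall architecture is the same as the paper's: compare $u_R$ with $v_m$ on $B_r$ via an interpolating competitor in an annulus, use minimality, pass to the limit $R\to\infty$ along the subsequence of Proposition~\ref{prop: subsequence convergence}, and close with Lemma~\ref{lem: growth reabsorbing lemma} starting from the crude $Cr^3$ bound. However, there is a genuine gap in the annulus step. Your competitor is the \emph{linear} convex combination $w=\chi v_m+(1-\chi)u_R$, and as you note this produces a potential-energy error controlled only by $\int_A\chi^2(1-\chi)^2|v_m-u_R|^4$. The phases of $v_m$ and $u_R$ need not be pointwise close anywhere in $A$: matching degrees around $X$ (Lemma~\ref{lem: degree of u}) is a purely topological constraint and gives no pointwise information on $\varphi_R-\phi$, so on a macroscopic portion of $A$ one can have $|v_m-u_R|\sim 1$ even where both moduli are close to $1$; there the convex combination dips into the interior of the unit disk and $(1-|w|^2)^2\sim 1$. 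This yields a potential cost of order $|A|\sim\delta r^3$, which dwarfs $r\log r$ and cannot be absorbed by the iteration. The good-radius selection does not repair this: it controls the sphere-averaged Dirichlet energy of $u_R$, not the pointwise phase discrepancy between $u_R$ and $v_m$, and shrinking the annulus width $h$ only trades the potential cost $hr^2$ against a cutoff cost $r^2/h$, which is still at best $O(r^2)$.

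The paper's proof avoids exactly this obstruction by interpolating in \emph{polar form}, in two stages. Writing $u_R=\rho_R e^{i\varphi_R}$ on $Q_R$ (possible since $(u_R)_1,(u_R)_2\ge 0$ there), one first interpolates only the phase, $Z_1=\rho_R e^{i((1-\eta)\varphi_R+\eta\phi)}$, so that $|Z_1|=\rho_R=|u_R|$ and the potential term is \emph{unchanged}; the Dirichlet cross terms are then handled by Cauchy--Schwarz exactly as in your scaling, using $|\varphi_R-\phi|\le\pi$. One then interpolates the modulus from $\rho_R$ to $d_m=\min\{1,m\,d_X\}$ at fixed phase $\phi$; the key inequality $\rho_R\le d_m$ (from the uniform gradient bound \eqref{eq534} and $u_R=0$ on $X$) guarantees $|Z_2|\ge\rho_R$, so the potential term only \emph{decreases}. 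This is the missing idea in your argument: the interpolation must be done separately in modulus and phase so that the potential never increases, rather than linearly in $\C$. With that modification (and keeping track of the resulting $(4\pi+C\delta)r\log r$ main term before sending $\delta\to 0$ at the very end), the rest of your scheme goes through as written.
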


\begin{proof}

Throughout this proof, $C>0$ denotes an absolute constant independent of $r$ and $R$ (it may vary from line to line). 

Since $u_R$ is smooth and has $(u_R)_1,(u_R)_2\geq 0$ in $Q_R$, we can write $u_R= \rho_R e^{i\varphi_R}$ in $Q_R$, for some smooth functions $\rho_R,\varphi_R:Q_R\to \R$. Similarly, we write $g=e^{i\phi}$, where $\phi:\partial^s Q_1 \to \R$ is the map constructed in Section~\ref{sec: g constr}, that we also extend to $Q_R$ by $\phi(x)=\phi(x/|x|)$. We observe that $0\leq \phi\leq \pi/2$ by construction, and also $0\leq \varphi_R \leq \pi/2$ since $u_R$ has nonnegative real and imaginary part in $Q_R$. 

Moreover, since we know that $\rho_R=0$ on $X$ and (\ref{eq534}) yields $|\nabla \rho_R| \le |\nabla u_R| \le m $ in $B_{R/2}$ for some absolute constant $m>0$ (independent of $R$), we deduce that $\rho_R \le \min\{1, m \cdot d_X\}$ in $B_{R/2}$.

Set
\[
w(x)  :=\rho_R(x) g \left(\frac{x}{|x|}\right)\qquad \forall x \in B_R,
\]
so in particular $w\in \widetilde{H}^1(B_R;\C)$ and
$$w(x)=\rho_R(x) e^{i\phi(x)} \qquad \forall x \in Q_R.$$

Let us fix $r>1$ and $\delta \in (0,1)$. Let $\eta_1:\R^3\to [0,1]$ be a smooth radial cutoff satisfying
$$\eta_1(x)=1 \quad \forall x\in B_{r+\delta r},\qquad \eta_1(x)=0 \quad \forall x\in \R^3\setminus B_{r+2\delta r},\qquad |\nabla \eta_1 (x)|\le \frac{2}{\delta r} \quad \forall x\in \R^3.$$

First, we interpolate $u_R$ and $w$. For $R>2r+4\delta r$, we consider the polar interpolation
\[
Z_1(x) := \rho_R(x)\,e^{\,i\Phi(x)}\quad\text{on } A_1:=(B_{ r+2\delta r} \setminus B_{r+\delta r})\cap Q_R,
\]
with
\[
\Phi(x):=(1-\eta_1(x)) \varphi_R(x) + \eta_1(x) \phi (x),
\]
and we extend $Z_1$ to a $B_{ r+2\delta r} \setminus B_{r+\delta r}$ by the usual symmeries.

Using the polar form and $|\nabla(\rho e^{i\theta})|^2=|\nabla\rho|^2+\rho^2|\nabla\theta|^2$ we have for the Sobolev part of the energy
\begin{align}
\frac{1}{2}\int_{A_1}|\nabla Z_1|^2 & = \frac{1}{2}\int_{A_1} |\nabla \rho_R|^2 + \frac{1}{2}\int_{A_1} \rho_R^2 |\nabla \Phi|^2 . \label{eq:grad-excess} 
\end{align}

Regarding the phase, we have
\begin{equation*}
    \nabla \Phi = (1-\eta_1) \nabla \varphi_R + \eta_1 \nabla\phi + (\phi-\varphi_R)\nabla \eta_1 , 
\end{equation*}
hence 
\begin{align*}
    |\nabla \Phi|^2 & = (1-\eta_1)^2|\nabla \varphi_R|^2 +\eta_1^2|\nabla \phi|^2  + |\nabla \eta_1|^2|\phi-\varphi_R|^2  \\ & + 2(1-\eta_1)\eta_1 \nabla \varphi_R \cdot \nabla \phi + 2 (1-\eta_1)(\phi-\varphi_R) \nabla \varphi_R \cdot \nabla \eta_1 + 2 \eta_1 (\phi -\varphi_R) \nabla \phi \cdot \nabla \eta_1 .
\end{align*}
Exploiting the estimate
$$2(1-\eta_1)\eta_1 \nabla \varphi_R \cdot \nabla \phi\leq \eta_1 ^2 |\nabla\varphi_R|^2 + (1-\eta_1)^2|\nabla \phi|^2,$$
and recalling that $|\nabla \eta_1| \le 2/(\delta r)$ and $|\phi-\varphi_R|\leq \pi$, we find that
\begin{align*}
    |\nabla \Phi|^2  & \le \big((1-\eta_1)^2+\eta_1^2 \big)|\nabla \varphi_R|^2 +\big(\eta_1^2 +(1-\eta_1)^2 \big)|\nabla \phi|^2 + \frac{C}{\delta^2 r^2} + \frac{C}{\delta r}|\nabla \varphi_R| + \frac{C}{\delta r}|\nabla \phi|  \\
    & \le |\nabla \varphi_R|^2 + |\nabla \phi|^2 + \frac{C}{\delta^2 r^2} + \frac{C}{\delta r}|\nabla \varphi_R| + \frac{C}{\delta r}|\nabla \phi| , 
\end{align*}
where we have used that $(1-t)^2+t^2 \le 1$ for $t\in [0,1]$.

Then we have 
\begin{multline*}
    \frac{1}{2}\int_{A_1}  \rho_R^2 |\nabla \Phi|^2  \le \frac{1}{2} \int_{A_1} \rho_R^2 |\nabla \varphi_R|^2 + \frac{1}{2}\int_{A_1} \rho_R^2 |\nabla \phi|^2 + \frac{Cr}{\delta} + \frac{C}{\delta r} \int_{A_1} \rho_R^2 |\nabla \varphi_R| + \frac{C}{\delta r} \int_{A_1} \rho_R^2 |\nabla \phi| \\
    \le \frac{1}{2}\int_{A_1} \rho_R^2 |\nabla \varphi_R|^2 + \frac{1}{2}\int_{A_1} \rho_R^2 |\nabla \phi|^2 + \frac{Cr}{\delta} + \frac{C\sqrt{r}}{\sqrt{\delta}} \sqrt{\int_{A_1} \rho_R^2 |\nabla \varphi_R|^2} + \frac{C\sqrt{r}}{\sqrt{\delta}} \sqrt{\int_{A_1} \rho_R^2 |\nabla \phi|^2},
\end{multline*}
where we have used that $\mathcal{H}^3(A_{1}) = Cr^3\delta $, that $\rho_R\leq 1$, and Jensen's inequality. Plugging this estimate in \eqref{eq:grad-excess} gives 
\begin{equation*}
    \frac{1}{2}\int_{A_1} |\nabla Z_1|^2 \le \frac{1}{2} \int_{A_1} |\nabla u_R|^2 + \frac{1}{2} \int_{A_1} \rho_R^2 |\nabla \phi|^2 + \frac{Cr}{\delta} + \frac{C\sqrt{r}}{\sqrt{\delta}} \sqrt{ E(u_R, A_1)} + \frac{C\sqrt{r}}{\sqrt{\delta}} \sqrt{\int_{A_1}\rho_R^2 |\nabla \phi|^2}
\end{equation*}

Now we observe that
$$\frac{1}{2}\int_{A_1}\rho_R^2 |\nabla \phi|^2\leq \frac{1}{2}\int_{A_1}\min\{1,m^2 d_X^2\} |\nabla \phi|^2 \leq\frac{1}{2} \int_{A_1} |\nabla v_m|^2=\frac{1}{8}\cdot\frac{1}{2}\int_{B_{ r+2\delta r} \setminus B_{r+\delta r}} |\nabla v_m|^2,$$
so Proposition~\ref{prop: en bound in Br} yields 
\begin{equation*}
    \frac{1}{2}\int_{A_1}\rho_R^2 |\nabla \phi|^2\leq  \frac{\pi}{2} \delta r\log (r+ 2\delta r) + C(r+2\delta r)\leq \frac{\pi}{2}\delta r\log r + Cr,
\end{equation*}
where $C$ is independent of $R$, $r$ and $\delta$. Therefore, we obtain that
\begin{align}
    \frac{1}{2}\int_{A_1} |\nabla Z_1|^2 &\le \frac{1}{2} \int_{A_1} |\nabla u_R|^2 + \frac{\pi}{2}\delta r\log r + Cr + \frac{Cr}{\delta} + \frac{C\sqrt{r}}{\sqrt{\delta}} \sqrt{ E(u_R, A_1)} + \frac{C\sqrt{r}}{\sqrt{\delta}} \sqrt{r\delta \log r + r} \nonumber\\
    &\leq \frac{1}{2} \int_{A_1} |\nabla u_R|^2 +\frac{C\sqrt{r}}{\sqrt{\delta}} \sqrt{ E(u_R, A_1)} + C \delta r\log r , \label{arsfasrf}
\end{align}
for some constant $C>0$ which is still independent of $r$ and $\delta$, provided $r>r_0=r_0(\delta)$, where $r_0$ is chosen so that
$$Cr + \frac{Cr}{\delta} + \frac{Cr}{\sqrt{\delta}} \sqrt{\delta \log r + 1} \leq \delta r\log r \qquad \forall r>r_0.$$

For the potential term, we simply have
\begin{equation}\label{eq:pot-excess}
\int_{A_1} \frac{(1-|Z_1|^2)^2}{4} = \int_{A_1} \frac{(1-\rho_R^2)^2}{4} = \int_{A_1} \frac{(1-|u_R|^2)^2}{4}
\end{equation}

Combining \eqref{arsfasrf} and \eqref{eq:pot-excess} we obtain that
\begin{equation}\label{est:Z1}
    E(Z_1, A_{1})   = \frac{1}{2}\int_{A_1} |\nabla Z_1|^2 + \int_{A_1} \frac{(1-|Z_1|^2)^2}{4} 
    \le  E(u_R, A_1) + \frac{C\sqrt{r}}{\sqrt{\delta}} \sqrt{ E(u_R, A_1)} + C\delta r \log r,
\end{equation}
for every $\delta\in (0,1)$, every $r>r_0(\delta)$ and every $R>2r+4\delta r$

Now we set
$$d_m(x):=\min \{1, m\cdot d_X(x) \},$$
and we consider the map introduced in Proposition~\ref{prop: en bound in Br}
\begin{equation*}
    v_m(x) = d_m(x) g \left(\frac{x}{|x|}\right)= d_m (x) e^{i\phi(x)},
\end{equation*}
where the second equality holds for $x\in Q_R$.

Let $\eta_2:\R^3\to [0,1]$ be a smooth radial cutoff satisfying
$$\eta_1(x)=1 \quad \forall x\in B_{r},\qquad \eta_2(x)=0 \quad \forall x\in \R^3\setminus B_{r+\delta r},\qquad |\nabla \eta_1 (x)|\le \frac{2}{\delta r} \quad \forall x\in \R^3.$$

We now consider the interpolation between $v_m$ and $w$ given by
\[
Z_2(x) := ((1-\eta_2(x))\rho_R(x)+\eta_2(x) d_m(x))e^{\,i\phi(x)}\quad\text{on }A_2:=(B_{r+\delta r}\setminus B_r)\cap Q_R,
\]
and we extend $Z_2$ to $B_{r+\delta r}\setminus B_r$ by the usual symmetries 

We have for the Sobolev part
\begin{align*}
\frac{1}{2} \int_{A_2}|\nabla Z_2|^2 & = \frac{1}{2} \int_{A_2} |\nabla ((1-\eta_2)\rho_R+\eta_2 d_m)|^2 + \frac{1}{2} \int_{A_2} ((1-\eta_2)\rho_R+\eta_2 d_m)^2 |\nabla \phi|^2 \\
& \le  \frac{1}{2}\int_{A_2} |\nabla ((1-\eta_2)\rho_R+\eta_2 d_m)|^2 + \frac{1}{2}\int_{A_2} d_m^2 |\nabla \phi|^2 ,    \label{eq:grad-excess} 
\end{align*}
since $\rho_R\le d_m$. 

By an estimate similar to the one for $\Phi$ above
\begin{equation*}
    |\nabla (1-\eta_2)\rho_R +\eta_2 d_m)|^2 \le |\nabla \rho_R |^2+|\nabla d_m|^2 +\frac{C}{\delta^2 r^2} + \frac{C}{\delta r}|\nabla \rho_R| +\frac{C}{\delta r}|\nabla d_m| , 
\end{equation*}
hence 
\begin{align*}
   \frac{1}{2} \int_{A_2}|\nabla Z_2|^2 & \le \frac{1}{2}\int_{A_2} |\nabla \rho_R|^2 + \frac{1}{2} \int_{A_2}  \bigl(|\nabla d_m|^2 +d_m^2|\nabla \phi|^2\bigl)+ \frac{Cr}{\delta} +\frac{C}{\delta r} \int_{A_2} |\nabla \rho_R| + \frac{C}{\delta r} \int_{A_2}  |\nabla d_m| \\
   & \le \frac{1}{2}\int_{A_2} |\nabla u_R|^2 + \frac{1}{2} \int_{A_2} |\nabla v_m|^2 + \frac{Cr}{\delta} + \frac{C\sqrt{r}}{\sqrt{\delta}} \sqrt{\int_{A_2} |\nabla u_R|^2} + \frac{C\sqrt{r}}{\sqrt{\delta}} \sqrt{\int_{A_2} |\nabla d_m|^2}.
\end{align*}

From the computations of Proposition~\ref{prop: en bound in Br} we find that
$$\int_{A_2} |\nabla d_m|^2 \leq C \delta r \quad \text{and}\qquad \int_{A_2} |\nabla v_m|^2\leq \frac{\pi}{2}\delta r\log r + C r,$$
so, as above, we obtain
\begin{align*}
   \frac{1}{2} \int_{A_2}|\nabla Z_2|^2 
   & \le \frac{1}{2}\int_{A_2} |\nabla u_R|^2 + \frac{Cr}{\delta} + \frac{C\sqrt{r}}{\sqrt{\delta}} \sqrt{\int_{A_2} |\nabla u_R|^2} +C\delta r \log r + Cr\\
   &\leq \frac{1}{2}\int_{A_2} |\nabla u_R|^2 + \frac{C\sqrt{r}}{\sqrt{\delta}} \sqrt{\int_{A_2} |\nabla u_R|^2} +C\delta r \log r
\end{align*}
for every $r>r_0(\delta)$, possibly for a larger $r_0$ than above (in case the constants $C$ here are larger).

For the potential part, since $d_m \ge \rho_R$ implies that $|Z_2| \ge \rho_R$, now we have
\begin{equation*}
\int_{A_2} \frac{(1-|Z_2|^2)^2}{4} \le  \int_{A_2} \frac{(1-\rho_R^2)^2}{4} = \int_{A_2} \frac{(1-|u_R|^2)^2}{4}.
\end{equation*}

Hence 
\begin{align}
    E(Z_2, A_2) & \le E(u_R, A_2) + \frac{C\sqrt{r}}{\sqrt{\delta}} \sqrt{\int_{A_2} |\nabla u_R|^2} +C\delta r \log r\nonumber\\
    & \le   E(u_R, A_2) + \frac{C\sqrt{r}}{\sqrt{\delta}} \sqrt{E(u_R,A_2)} +C\delta r \log r . \label{est:Z2}
\end{align}

Finally, by Proposition \ref{prop: en bound in Br} we find that
\begin{equation}\label{est:v}
E(v_m,B_r)\leq 4\pi r\log r + Cr\leq (4\pi +\delta)r\log r,
\end{equation}
for every $r>r_0$.

For $r>r_0$ and $R>2r+4\delta r$ define $Z\in \widetilde  H^1(B_R;\mathbb C)$ by
\[
Z=\begin{cases}
v_m & \text{in }B_r,\\
Z_2 & \text{in }B_{r+\delta r}\setminus B_{r},\\
Z_1 & \text{in }B_{r+2\delta r}\setminus B_{r+\delta r},\\
u_R & \text{in }B_R\setminus B_{r+2\delta r}.
\end{cases}
\]

Minimality of $u_R$ yields
$$E(u_R,B_R) \le E(Z,B_R) = E(v, B_r) + 8E(Z_2,A_2) + 8E(Z_1,A_1) +E(u_R,B_R\setminus B_{r+2\delta}),$$
so from (\ref{est:Z1}), (\ref{est:Z2}) and (\ref{est:v}) we deduce that for every $r>r_0$ and every $R>r+2\delta r$ it holds
\begin{align*}
E(u_R,B_{r+2\delta}) &\le (4\pi+C\delta) r\log r  + 8E(u_R, A_2)+ \frac{C\sqrt{r}}{\sqrt{\delta}} \sqrt{E(u_R,A_2)}\\
&\quad + 8E(u_R, A_1) +\frac{C\sqrt{r}}{\sqrt{\delta}} \sqrt{ E(u_R, A_1)},
\end{align*}
which implies
$$E(u_R,B_{r}) \le  (4\pi +C\delta)r \log r +\frac{C\sqrt{r}}{\sqrt{\delta}} \sqrt{ E(u_R, B_{r+2\delta r})}.$$

Let $R_j\to+\infty$ be the subsequence in Proposition \ref{prop: subsequence convergence} such that $u_{R_j}\to u$ in $C^2_{\rm loc}(\mathbb R^3)$. 
Letting $j\to+\infty$ yields
\begin{equation*}
    E(u,B_r) \le  (4\pi+C\delta) r\log r + \frac{C\sqrt{r}}{\sqrt{\delta}} \sqrt{ E(u, B_{(1+2\delta)r})}, 
\end{equation*}
for every $r>r_0(\delta)$.

On the other hand, the Lipschitz bound on $u_R$ immediately yields
$$E(u,B_r)=\lim_{j\to +\infty} E(u_{R_j},B_r)\leq \int_{B_r}\left(\frac{m^2}{2} +\frac{1}{4}\right)\leq Cr^3.$$

As a consequence, from Lemma~\ref{lem: growth reabsorbing lemma} applied with
$$f(r)=E(u,B_r),\quad A=4\pi+C\delta, \quad B= \frac{C}{\sqrt{\delta}},\quad \lambda=1+\delta ,\quad K=C,$$
we deduce that
$$\limsup_{r\to +\infty} \frac{E(u,B_r)}{r\log r}\leq 4\pi +C\delta.$$

Since $C$ is a universal constant independent of $\delta$, letting $\delta\to 0$, we conclude the proof.
\end{proof}

\begin{lemma}\label{lem: growth reabsorbing lemma}
    Let $r_0\geq 1$ be a positive number and let $f:(r_0, +\infty) \to (0, +\infty) $ be a function such that 
    \begin{equation*}
        f(r) \le A r\log r + B \sqrt{r} \sqrt{f(\lambda r)} ,  \quad \forall \, r> r_0.  
    \end{equation*}
    and
    \begin{equation*}
        f(r) \le K r^3 ,  \quad \forall \, r> r_0.  
    \end{equation*}
    for some $A,B,K>0$ and $\lambda >1$. Then there exists $r_1=r_1(A,B, \lambda )\geq r_0$ such that
    \begin{equation*}
        f(r)\le Ar\log r + r(\log r)^{2/3} + B^2\lambda^2 r \qquad \forall r>r_1.
    \end{equation*}

    In particular, it holds that
    $$\limsup_{r\to +\infty} \frac{f(r)}{r\log r}\leq A.$$
\end{lemma}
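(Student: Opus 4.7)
The plan is to iterate the inequality $f(r)\le Ar\log r+B\sqrt{r\,f(\lambda r)}$ a carefully chosen (depending on $r$) number of times, using the subadditivity $\sqrt{a+b}\le\sqrt{a}+\sqrt{b}$ to peel off at each step an explicit $r\cdot(\log r)^{2^{-k}}$ term, and absorbing the remaining residue via the a priori polynomial bound $f(s)\le Ks^3$. Proceeding by induction on $n\ge 1$ yields
\[
f(r)\le Ar\log r+\sum_{k=1}^{n-1}c_k\,r\,\bigl(\log(\lambda^k r)\bigr)^{2^{-k}}+d_n\,r^{1-2^{-n}}\,f(\lambda^n r)^{2^{-n}},
\]
where the coefficients $c_k,d_n$ are given by explicit multiplicative recursions in $A,B,\lambda$. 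A direct computation shows that the $B$- and $\lambda$-exponents of these coefficients are partial sums of the convergent series $\sum 2^{-j}$ and $\sum j\,2^{-j}$, respectively, so that $c_k,d_n\le C(A,B,\lambda)$ uniformly in $k,n$; in fact $d_n\to B^2\lambda$ as $n\to+\infty$.

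Now choose the iteration depth $n=n(r):=\lceil \log_2\log r\rceil$, so that $r^{2^{-n}}\le e$. Inserting $f(\lambda^n r)\le K(\lambda^n r)^3$ into the residue, one obtains
\[
d_n\,r^{1-2^{-n}}\,f(\lambda^n r)^{2^{-n}}\le d_n\,K^{2^{-n}}\,\lambda^{3n\cdot 2^{-n}}\,r^{1+2^{-n}}\le C'(A,B,\lambda,K)\,r,
\]
since $r^{2^{-n}}\le e$, $K^{2^{-n}}\to 1$ and $\lambda^{3n\cdot 2^{-n}}\to 1$. For the log sum, the bound $\bigl(\log(\lambda^k r)\bigr)^{2^{-k}}\le 2(\log r)^{2^{-k}}$ holds for $r$ large and $1\le k\le n(r)$, and each summand is maximized at $k=1$; hence
\[
\sum_{k=1}^{n(r)-1}c_k\,r\,\bigl(\log(\lambda^k r)\bigr)^{2^{-k}}\le 2C\,r\,n(r)\,(\log r)^{1/2}=O\bigl(r\,\sqrt{\log r}\,\log\log r\bigr).
\]
Since $\sqrt{\log r}\,\log\log r=o((\log r)^{2/3})$ and any constant multiple of $r$ is eventually dominated by $r(\log r)^{2/3}$, we conclude $f(r)\le Ar\log r+r(\log r)^{2/3}+B^2\lambda^2 r$ for every $r\ge r_1(A,B,\lambda)$, and dividing by $r\log r$ and letting $r\to+\infty$ yields the $\limsup$ claim. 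The exponent $2/3$ is not accidental: it must strictly exceed $1/2$ in order to absorb the extra $\log\log r$ factor produced by the iteration depth.

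The main technical difficulty is the coefficient bookkeeping described above, which requires tracking how the exponents of $A,B,\lambda$ evolve under the recursion. I remark that a more direct supersolution-comparison approach based on the target function $\Phi(r):=Ar\log r+r(\log r)^{2/3}+B^2\lambda^2 r$ (which, by an elementary computation, does satisfy $Ar\log r+B\sqrt{r\,\Phi(\lambda r)}\le\Phi(r)$ for $r$ large) fails to close: attempting to prove $f\le\Phi$ by contradiction and iterating $\Phi(\lambda^n r_*)<f(\lambda^n r_*)\le K(\lambda^n r_*)^3$ gives a right-hand side growing like $\lambda^{3n}$ while the left-hand side grows only like $n\lambda^n$, so no contradiction arises from the a priori $f\le Kr^3$ alone. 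The explicit iterated expansion described above is therefore essential.
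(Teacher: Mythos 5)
Your proof is correct, and it rests on the same core mechanism as the paper's: iterate the recursive inequality, split with $\sqrt{a+b}\le\sqrt a+\sqrt b$, note that the coefficient recursion converges because the exponents of $B$ and $\lambda$ are partial sums of $\sum 2^{-j}$ and $\sum j2^{-j}$, and kill the tail with $f\le Kr^3$. The execution differs, though. The paper chooses $r_1=r_1(A,B,\lambda)$ once so that the term peeled off at each step is absorbed into a \emph{single} $r(\log r)^{2/3}$, which keeps the induction hypothesis in the closed form $f(r)\le Ar\log r+r(\log r)^{2/3}+B^{2-2^{-n}}K^{2^{-n-1}}\lambda^2 r^{1+2^{-n}}$ for all $r>r_1$ and all $n$; the conclusion follows by letting $n\to\infty$ at fixed $r$. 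You instead unroll to an $r$-dependent depth $n(r)\sim\log_2\log r$ and sum all the peeled terms, paying an extra $\log\log r$ factor; note that the reason you need the exponent $2/3>1/2$ is ultimately the same as in the paper, where condition \eqref{def:r_1} also requires $\sqrt{\log r}=o((\log r)^{2/3})$. Two minor points. First, the residue exponent should be $r^{1+2^{1-n}}$ rather than $r^{1+2^{-n}}$ (harmless: still $\le e^2 r$ with your choice of $n(r)$). Second, your threshold for the pointwise bound $f(r)\le Ar\log r+r(\log r)^{2/3}+B^2\lambda^2 r$ ends up depending mildly on $K$ as well (through $K^{2^{-n(r)}}$ and the absorption of the residual $C'(A,B,\lambda,K)\,r$ into $r(\log r)^{2/3}$), whereas the lemma asserts $r_1=r_1(A,B,\lambda)$; this is immaterial for the $\limsup$ claim, which is all that is used in the paper, but the paper's fixed-form induction does deliver the $K$-independent threshold exactly. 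Your closing remark that a pure supersolution/contradiction argument does not close without the explicit iteration is a fair observation and consistent with why the paper also proceeds by iteration.
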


\begin{proof}

Let us fix $r_1>r_0$ sufficiently large so that
\begin{equation}\label{def:r_1}
B\sqrt{\lambda} \sqrt{A\log(\lambda r) + (\log(\lambda r))^{2/3}}\leq (\log r)^{2/3} \qquad \forall r>r_1.
\end{equation}

Now we claim that for every $n\in\mathbb{N}$ it holds that
\begin{equation}\label{claim:induction}
f(r)\leq Ar\log r + r (\log r)^{2/3} + B^{2-2^{-n}}K^{2^{-n-1}} \lambda ^{2} r^{1+2^{-n}} \qquad\forall r>r_1.
\end{equation}

We prove this by induction. First of all, we observe that our assumptions on $f$ yield
$$f(r)\leq A r\log r + B \sqrt{r} \sqrt{K\lambda^3 r^3}=A r\log r + B K^{1/2} \lambda^{3/2} r^2 \qquad \forall r > r_0,$$
and, since $\lambda>1$, this implies (\ref{claim:induction}) with $n=0$.

Now let us assume that (\ref{claim:induction}) holds for some $n\in\mathbb{N}$, so we deduce that
\begin{align*}
f(r)&\leq Ar\log r + B \sqrt{r} \sqrt{f(\lambda r)} \\
&\leq Ar\log r + B \sqrt{r} \sqrt{A\lambda r\log(\lambda r) + \lambda r (\log (\lambda r))^{2/3} + B^{2-2^{-n}}K^{2^{-n-1}} \lambda ^{2} \lambda ^{1+2^{-n}} r^{1+2^{-n}}}\\
&\leq Ar\log r + B \sqrt{r} \sqrt{A\lambda r\log(\lambda r) + \lambda r (\log (\lambda r))^{2/3}} + B\sqrt{r}\sqrt{B^{2-2^{-n}}K^{2^{-n-1}} \lambda ^{3+2^{-n}} r^{1+2^{-n}}}\\
&= Ar\log r + r B\sqrt{\lambda} \sqrt{A \log(\lambda r) +  (\log (\lambda r))^{2/3}} + B^{2-2^{-n-1}}K^{2^{-n-2}} \lambda ^{3/2+2^{-n-1}} r^{1+2^{-n-1}}
\end{align*}
for every $r>r_1$.

Now we recall that $\lambda>1$ and we observe that $3/2+2^{-n-1}\leq 2$ so, plugging (\ref{def:r_1}) into the last line, we obtain (\ref{claim:induction}) with $n+1$ in place of $n$. Hence (\ref{claim:induction}) holds for every $n\in\mathbb{N}$.

Therefore, letting $n\to +\infty$ in (\ref{claim:induction}), we conclude the proof.
\end{proof}

\subsection{Lower bound}

From here, the sharp energy bound \eqref{eq: main energy bound for u} easily follows by the clearing-out lemma (i.e., Theorem \ref{clearing out}) applied to 
\begin{equation*}
    u_r(x) = u(r x) 
\end{equation*}
and a simple covering argument. Fix $\delta<1/10$ and $\tau < 1 $. It is easy to see that there exists a family of $N$ disjoint balls $\{B_\delta(x_i)\}_{i=1}^N$ contained in $B_1$ with $x_i \in X$ and 
\begin{equation*}
   N \ge \frac{2}{\delta} - 10 .  
\end{equation*}

Since $u(x_i)=0$ for every $i$, by Theorem \ref{clearing out} we get 
\begin{equation*}
    \frac{E(u, B_r)}{r \log(r)} =  \frac{E_{1/r}(u_r, B_1)}{\log(r)} \ge \sum_i \frac{E_{1/r}(u_r, B_\delta(x_i))}{\log(r)} \ge   (2- 10  \delta)(2\pi-\kappa) \frac{\log(\delta r)}{\log(r)} , 
\end{equation*}
when $r$ is large enough, hence
\begin{equation*}
    \liminf_{r \to \infty } \frac{E(u, B_r)}{r \log(r)} \ge (2 - 10 \delta)(2\pi-\kappa) .  
\end{equation*}
Letting $\kappa \to 0^+$ and $\delta \to 0^+$ gives 
\begin{equation*}
     \liminf_{r \to \infty } \frac{E(u, B_r)}{r \log(r)} \ge 4\pi  
\end{equation*}
which, together with Proposition \ref{prop: sharp energy ub}, concludes the proof of \eqref{eq: main energy bound for u}.

\subsection{Concentration of the normalized energy densities}

We apply Theorem~\ref{thm: general convergence theorem} with $\varepsilon=1/r$ to the blow-downs $u_r$. 
Then, up to extracting a subsequence, the normalized energy measures
\begin{equation*}
    \mu_r := \frac{e_{1/r}(u_r)}{\pi\log r}
\end{equation*}
converge weakly in $C_c^0(B_1)^*$ to a limit of the form
\begin{equation*}
    \mu_\infty  =  |V|  +  f \cdot dx\mres B_1,
\end{equation*}
where $f\ge 0$ and $V$ is a stationary and rectifiable $(n-2)$-varifold with density $\theta\ge 1$, whose support contains $X\cap B_1$, so in particular $|V|\geq\theta \mathcal{H}^1\mres(X\cap B_1)$. Since $\mathcal{H}^1(X\cap B_1)=4$ (two orthogonal diameter segments in $B_1$), we have
\begin{equation*}
    |V|(B_1)\geq \theta\,\mathcal{H}^1(X\cap B_1)\ge 4,
\end{equation*}
and hence
\begin{equation*}
    4 + \int_{B_1} f \le \mu_\infty(B_1) \le \liminf_{r \to \infty} \mu_r(B_1) = 4 , 
\end{equation*}
where we have used \eqref{eq: main energy bound for u} in the last equality. Therefore $\int_{B_1} f\le 0$, which, together with $f\ge 0$, yields $f\equiv 0$. 
Moreover, $|V|(B_1)=4$ forces $\theta=1$ for $\mathcal{H}^1$-a.e.\ point of $X\cap B_1$, and $V=\mathcal{H}^1\mres(X\cap B_1)$.

We conclude that
\begin{equation*}
    \mu_\infty=\mathcal{H}^1\mres(X\cap B_1).
\end{equation*}
Since the above argument applies to any convergent subsequence, the whole sequence $\mu_r$ converges to this limit as $r\to\infty$.

\appendix

\section{Appendix}

We collect here two important results for solutions of the Ginzburg-Landau system 
\begin{equation}\tag{\(GL_\ep \)}\label{GL ep}
    -\Delta u = \frac{1}{\ep^2} u(1-|u|^2) , 
\end{equation} 
with parameter $\ep>0$.




The first one is the sharp version of the clearing-out lemma for $n=3$. This was first proved in \cite{SandierShafrir} in dimension three in the case of local minimizers, and then recently extended to every dimension \cite[Corollary 1.7]{Quantnonquant}. Previous non-sharp versions of this result were proved in the pioneering works \cite{Riv1, LinRiv1, LinRiv2, BBOsmallenergy}. In the statement $\omega_k$ denotes the Lebesgue measure of the unit ball in $\R^k$.

 \begin{theorem}[Clearing-out lemma]\label{clearing out}
 Let $\ep>0$, $x\in \R^n$ and $u_\ep : B_\delta(x) \to \C$ be a solution to \eqref{GL ep}. Then, for every $\kappa \in ( 0,\pi \omega_{n-2}) $ there exists $c(\kappa, n)>0$ such that if $\ep/\delta \le c$ and
 \begin{equation*}
     \frac{E_\ep(u_\ep, B_\delta(x))}{\delta^{n-2}} \le (\pi \omega_{n-2} - \kappa) \log(\delta/\ep) , 
 \end{equation*}
 then $|u(x)| \ge 1/2$. 
 \end{theorem}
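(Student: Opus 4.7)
The plan is to prove the contrapositive: if $|u_\ep(x)|<1/2$, then
$$
\frac{E_\ep(u_\ep,B_\delta(x))}{\delta^{n-2}}\;\ge\;(\pi\omega_{n-2}-\kappa)\log(\delta/\ep).
$$
By translation and the standard rescaling $\tilde u(y):=u_\ep(x+\ep y)$, I may assume $x=0$, $\ep=1$, and that $u:=\tilde u$ solves \eqref{GL} on $B_R$ with $R:=\delta/\ep$ large; the target becomes $E_1(u,B_R)\ge(\pi\omega_{n-2}-\kappa)R^{n-2}\log R$ under the hypothesis $|u(0)|<1/2$.

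The core ingredient is the sharp $2$-dimensional version: for $v:D_R\to\C$ solving \eqref{GL} with $|v(0)|<1/2$, one has $E_1(v,D_R)\ge\pi\log R-C$. I would prove this via the classical vortex ball construction. Interior gradient estimates $|\nabla v|\le C$ yield a definite neighborhood $D_{c_0}\subset\{|v|\le 3/4\}$ carrying $\Theta(1)$ potential energy; outside a cover of $\{|v|\le 1/2\}$ by finitely many disjoint disks $D_{r_i}(x_i)$ of total radius $\sum r_i\le C$, the map $v/|v|$ has a well-defined degree $d_i$ on each $\partial D_{r_i}$, and the pointwise inequality $\tfrac12|\nabla v|^2\ge\tfrac12|v|^2|\nabla(v/|v|)|^2$, integrated over annuli via $\int r^{-1}\,dr$, gives
$$
E_1\Bigl(v,D_R\setminus\bigcup D_{r_i}\Bigr)\ge \pi\Bigl(\sum_i|d_i|\Bigr)\log\bigl(R/\textstyle\sum r_i\bigr)-C,
$$
while a topological argument based on $|v(0)|<1/2$ forces $\sum|d_i|\ge 1$.

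For $n\ge 3$ I would slice $B_R$ by parallel $2$-planes. Writing $x=(\xi,y)\in\R^2\times\R^{n-2}$, for each $|y|<R$ the slice $u_y(\xi):=u(\xi,y)$ solves \eqref{GL} on the $2$-disk of radius $\rho(y):=\sqrt{R^2-|y|^2}$, and since the $y$-derivative contribution is nonnegative, Fubini yields
$$
E_1(u,B_R)\ge\int_{\{|y|<R\}} E_1\bigl(u_y,D_{\rho(y)}\bigr)\,dy.
$$
Granting that every $y$ in a full $(n-2)$-ball of radius $R$ produces a slice carrying a vortex, applying the $2$-dimensional bound and computing $\int_{|y|<R}\log\rho(y)\,dy=\omega_{n-2}R^{n-2}\log R+O(R^{n-2})$ delivers the sharp lower bound, with the $O(R^{n-2})$ error absorbed into $\kappa$ for $R$ large.

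The hard step, and the genuine content of the theorem for $n\ge 3$, is justifying that an $(n-2)$-dimensional family of slices carries a vortex: the hypothesis $|u(0)|<1/2$ only provides one bad point, and gradient estimates propagate it on scale $O(1)$, not $O(R)$. The natural route, and the one implicit in \cite{Quantnonquant}, is compactness combined with quantization: assuming the bound fails along a sequence $u_k$ on $B_{R_k}$ with $R_k\to\infty$ and $|u_k(0)|<1/2$, one rescales $v_k(x):=u_k(R_k x)$ on $B_1$ and applies the Pigati–Stern structure theorem (Theorem~\ref{thm: general convergence theorem}) to extract a limiting stationary rectifiable $(n-2)$-varifold whose density at $0$ is positive by lower semicontinuity of the vorticity mass, and hence at least $1$ by integrality, forcing the limiting energy to exceed $\pi\omega_{n-2}$ per unit $(n-2)$-volume and contradicting the assumed upper bound. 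I expect this quantization step to be the decisive obstacle, as it is precisely where the geometry of the vortex filament is transported from the origin across all of $B_R$.
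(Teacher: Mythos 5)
First, a point of context: the paper does not prove this theorem at all — it is imported as a black box from the literature (\cite{SandierShafrir} for minimizers in dimension three, and \cite[Corollary~1.7]{Quantnonquant} in general) — so the comparison here is with the known proofs rather than with an argument in the text. Your primary route (sharp 2D vortex-ball lower bound plus slicing by parallel $2$-planes) has a genuine gap, and it is exactly the one you flag yourself: the hypothesis $|u(0)|<1/2$ together with the interior gradient bound $|\nabla u|\le C/\ep$ only forces $|u|\le 3/4$ on a ball of radius $O(\ep)$, so only the slices $u_y$ with $|y|=O(\ep)$ are guaranteed to carry a vortex. Fubini then yields a lower bound of order $\ep^{n-2}\log(\delta/\ep)$, not $\delta^{n-2}\log(\delta/\ep)$; nothing elementary rules out that $\{|u|<1/2\}$ is a tiny island near $x$. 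Propagating the vorticity across an $(n-2)$-ball of radius comparable to $\delta$, with the sharp constant $\pi\omega_{n-2}$, is precisely the content of the theorem and of the Pigati--Stern quantization analysis, so the slicing scheme cannot deliver it.

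Your fallback, however, is not merely a conjecture: blow-down compactness combined with Theorem~\ref{thm: general convergence theorem} is the correct and complete route, and carrying it out closes the gap. If the statement failed for a sequence $c_k\to0$, rescaling by $\delta_k$ produces solutions $v_k$ of $(GL_{\tilde\ep_k})$ on $B_1$ with $\tilde\ep_k=\ep_k/\delta_k\to0$, $|v_k(0)|<1/2$ and $E_{\tilde\ep_k}(v_k,B_1)\le(\pi\omega_{n-2}-\kappa)\abs{\log\tilde\ep_k}$. Theorem~\ref{thm: general convergence theorem} then gives $\mu_{\tilde\ep_k}\rightharpoonup |V|+f\,dx$ with $0\in\supp(|V|)$ (since $\supp(|V|)$ is the Hausdorff limit of $\{|v_k|<1/2\}$) and density at least $1$ there; the monotonicity formula for the stationary varifold $V$ converts the density bound at the origin into $|V|(B_1)\ge\omega_{n-2}$, and lower semicontinuity of mass on the open set $B_1$ contradicts the assumed energy bound. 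Note that this derivation consumes the full strength of the structure theorem (stationarity plus the density lower bound, i.e.\ the quantization step you correctly identify as decisive), so it is a legitimate deduction only because that theorem is itself quoted; as written, your proposal stops short of performing it and therefore leaves the key step unproved.
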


Following the combination of the results \cite{BBH, BOSannals, DanielExistence, Quantnonquant}, we recall the asymptotic description of solutions to the Ginzburg-Landau system with parameter $\ep$ as $\ep \to 0$. 

\begin{theorem}\label{thm: general convergence theorem} Let $n\ge 3$ and $u_\ep: B_1 \to \C$ be a sequence of solutions to \eqref{GL ep} with 
\begin{equation*}
  E_\ep(u_\ep, B_1) \le C \abs{\log(\ep)} , 
\end{equation*}
for some constant $C>0$ independent of $\ep$. Then, up to subsequences, the normalized energy densities 
\begin{equation*}
    \mu_\ep := \frac{e_\ep(u_\ep)}{\pi \abs{\log(\ep)}} \, dx \mres B_1
\end{equation*}
    converge in $C_c^0(B_1)^*$ to a Radon measure $\mu$ which decomposes as
    \begin{equation*}
        \mu = |V| + f \cdot  dx \mres B_1 , 
    \end{equation*}
    where $f: B_1 \to \R$ is a smooth, nonnegative function and $V$ is a stationary, rectifiable $(n-2)$-varifold with density bounded below by $1$ on its support. Moreover, $\supp(|V|)$ is the limit of the sets $\{|u_\ep|<1/2\}$ in the local Hausdorff topology. 
\end{theorem}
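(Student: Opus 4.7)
The plan is to prove this via four coupled steps: weak compactness of the measures, decomposition into a diffuse and a concentrated part, a stationarity identity inherited from the Ginzburg--Landau stress--energy tensor, and rectifiability plus a density lower bound based on the clearing-out lemma. The hypothesis $E_\ep(u_\ep,B_1)\le C|\log\ep|$ bounds $\mu_\ep(B_1)$ uniformly, and the $\ep$-scaled monotonicity formula for \eqref{GL ep} controls $\mu_\ep$ on every $B_r(x)\Subset B_1$, so Banach--Alaoglu yields a subsequence $\mu_\ep\rightharpoonup\mu$ in $C_c^0(B_1)^*$. Simultaneously, along a further subsequence, the closed sets $\Sigma_\ep:=\{|u_\ep|\le 1/2\}$ converge in the local Hausdorff topology to a closed set $\Sigma\subset B_1$, which is the candidate for $\supp|V|$.

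Next, I would decompose $\mu$. On any compact $K\subset B_1\setminus\Sigma$, one has $|u_\ep|\ge 1/2$ for $\ep$ small, so writing $u_\ep=\rho_\ep e^{i\phi_\ep}$ locally is legitimate; interior elliptic estimates applied to \eqref{GL ep} together with the energy bound force $\rho_\ep\to 1$ smoothly, while the gauge-invariant current $j_\ep:=(iu_\ep,\nabla u_\ep)/|u_\ep|^{2}$ converges in $L^2_{\mathrm{loc}}$ to a (locally) harmonic $1$-form whose normalized energy density is a smooth $f\ge 0$ on $B_1\setminus\Sigma$. Setting $|V|:=\mu-f\cdot dx\mres B_1$ gives $\supp|V|\subset\Sigma$. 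For stationarity I would exploit the divergence-free Ginzburg--Landau stress--energy tensor
\begin{equation*}
    T_\ep^{ij} := \partial_i u_\ep\cdot\partial_j u_\ep-\delta_{ij}\,e_\ep(u_\ep),
\end{equation*}
which satisfies $\partial_j T_\ep^{ij}=0$ as an exact identity for every solution of \eqref{GL ep}. Testing against a vector field $X\in C^1_c(B_1;\R^n)$ and dividing by $\pi|\log\ep|$ produces an inner-variation identity; passing to the limit gives a first-variation identity for $\mu$ viewed as an $(n-2)$-dimensional generalized varifold, and subtracting the trivially stationary smooth contribution $f\cdot dx$ leaves stationarity of $|V|$.

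Rectifiability and the density lower bound are the crux. At any $x\in\Sigma$ there exist $x_\ep\to x$ with $|u_\ep(x_\ep)|<1/2$; applying the sharp clearing-out lemma (Theorem~\ref{clearing out}) at scale $\delta$ forces
\begin{equation*}
    \frac{E_\ep(u_\ep,B_\delta(x))}{\delta^{n-2}}\ge (\pi\omega_{n-2}-\kappa)\log(\delta/\ep)
\end{equation*}
for any $\kappa\in(0,\pi\omega_{n-2})$, which after dividing by $\pi|\log\ep|$ and letting $\ep\to 0$ gives $\Theta^{n-2}(|V|,x)\ge 1$ $\mathcal H^{n-2}$-a.e. on $\Sigma$. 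Combined with the stationarity established above, Allard's rectifiability theorem then promotes $V$ to a stationary rectifiable $(n-2)$-varifold with density $\ge 1$ on its support. The identification $\supp|V|=\Sigma$ follows from two inclusions: one by construction, the reverse by applying Theorem~\ref{clearing out} contrapositively to rule out points of $\Sigma$ with vanishing $|V|$-density.

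The main obstacle is the rectifiability/unit-density step: producing a stationary rectifiable limit with the sharp density threshold is a nontrivial regularity problem and required the machinery of Pigati--Stern, in particular the sharp constant $\pi\omega_{n-2}$ in the clearing-out lemma and a careful blow-up analysis to upgrade the stationary generalized varifold to a stationary rectifiable one. It is essential that these results hold for \emph{arbitrary} critical points and not only minimizers, since in our application Theorem~\ref{thm: general convergence theorem} is applied to the blow-downs $u_r(x)=u(rx)$, which are only symmetric-constrained minimizers. By contrast, the diffuse smooth part $f$ and the stress--energy stationarity are comparatively soft, relying on standard elliptic regularity away from $\Sigma$ and on the Noether identity $\partial_j T_\ep^{ij}=0$.
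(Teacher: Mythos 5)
This theorem is not proved in the paper: it is recalled in the appendix as a black-box result assembled from the cited literature (\cite{BBH, BOSannals, DanielExistence, Quantnonquant}), so there is no in-paper argument to compare against. Your sketch is a faithful roadmap of how that literature actually proceeds (compactness via the $\ep$-monotonicity formula, splitting off a diffuse part away from the vorticity set, inner variations of the stress--energy tensor to get a stationary generalized $(n-2)$-varifold, then clearing-out plus an Allard-type rectifiability theorem), and you correctly identify that the rectifiability/density step is the deep content that must be imported from Pigati--Stern rather than reproved. In that sense your proposal has the same logical status as the paper's treatment: an outline that defers the hard analysis to the references.

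Two points in the sketch are imprecise as stated. First, the diffuse part: if $f\not\equiv 0$ then $\int|j_\ep|^2\sim|\log\ep|$, so the current $j_\ep=(iu_\ep,\nabla u_\ep)/|u_\ep|^2$ does \emph{not} converge in $L^2_{\rm loc}$; it is the renormalized current $j_\ep/\sqrt{\pi|\log\ep|}$ that converges to a harmonic $1$-form whose squared modulus is $f$. Second, passing from the limiting first-variation identity to a genuine stationary \emph{rectifiable} $(n-2)$-varifold is not a direct application of Allard: one must first show that the limiting matrix-valued measure (the limit of $T^{ij}_\ep/(\pi|\log\ep|)$) has the eigenvalue structure of an $(n-2)$-plane, i.e.\ that the discrepancy between the potential term and the tangential gradient terms vanishes in the limit, and then invoke the Ambrosio--Soner rectifiability criterion for generalized varifolds with positive lower density. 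That vanishing-discrepancy step, together with the sharp constant $\pi\omega_{n-2}$ in the clearing-out lemma, is exactly where the quantitative work of \cite{BOSannals, Quantnonquant} enters, so your sketch should be read as an outline conditional on those results rather than a self-contained proof.
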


\vspace{10pt}

\noindent \textbf{Acknowledgements.} The authors are grateful to Andrea Malchiodi for suggesting this problem, and to Alessandro Pigati for pointing out a mistake in a preliminary version of this work. 

The authors are members of the {\selectlanguage{italian}``Gruppo Nazionale per l'Analisi Matematica, la Probabilità e le loro Applicazioni''} (GNAMPA) of the {\selectlanguage{italian}``Istituto Nazionale di Alta Matematica''} (INdAM).

M.~C. was funded by NSF grant DMS-2304432 and is grateful to Stanford University for its kind hospitality during the realization of part of this work.

N.~P. acknowledges the INdAM-GNAMPA Project {\selectlanguage{italian} \em Gamma-convergenza di funzionali geometrici non-locali}, CUP \#E5324001950001\#.

N.~P. acknowledges the MIUR Excellence Department Project awarded to the Department of Mathematics, University of Pisa, CUP I57G22000700001.

	 	 	\bibliography{references}
	 	 	 \bibliographystyle{alpha}
	 	 	
\end{document}